\documentclass[12pt]{amsart}
\usepackage{amsthm}
\usepackage{amsmath}
\usepackage[margin=1.25in]{geometry}
\usepackage{amssymb}
\usepackage{verbatim}
\usepackage{longtable}
\usepackage{tikz-cd}
\usepackage{color}
\usepackage{stmaryrd}
\usepackage{graphicx}
\usepackage{latexsym}   
\usepackage{amsmath}    
\usepackage{amsbsy}
\usepackage{array}
\usepackage[all]{xy}
\usepackage{caption}
\usepackage{enumerate}
\usepackage{mathabx}

\theoremstyle{plain}
\newtheorem{algorithm}{Algorithm}[section]

\newtheorem{corollary}[algorithm]{Corollary}

\newtheorem{definition}[algorithm]{Definition}

\newtheorem{lemma}[algorithm]{Lemma}
\newtheorem{sublemma}[algorithm]{Sublemma}
\newtheorem{question}[algorithm]{Question}

\newtheorem*{theorem*}{Theorem}
\newtheorem{theorem} [algorithm] {Theorem}

\newtheorem{proposition}[algorithm]{Proposition}
\newtheorem{remark}[algorithm]{Remark}

\numberwithin{equation}{algorithm}

\newtheorem*{BC}{Bott Conjecture}
\newtheorem*{MSRC}{Maximal Symmetry Rank Conjecture for Almost Non-negative Curvature}

\newtheorem*{thmA}{Theorem A}

\newtheorem*{corB}{Corollary B}

\newtheorem*{ST}{Slice Theorem}

\usepackage{epsfig}
\usepackage[]{graphicx}
\graphicspath{ {./Images/} }
\usepackage{epstopdf}

\usepackage{hyperref}
\usepackage{listings}
\usepackage{enumerate}
\usepackage{caption}
\usepackage{subcaption}
\usepackage{float}

\newtheoremstyle{redefinition}
    {}{}
    {}{}
    {\bfseries}{.}
    { }
    {\thmname{#1}\thmnumber{ #2}\thmnote{ [#3]}}
\theoremstyle{redefinition}
\newcommand{\R}{\mathbb{R}}

\newcommand{\C}{\mathbb{C}}

\DeclareMathOperator{\diam}{diam}
  
\DeclareMathOperator{\curv}{curv}

\usepackage[T1]{fontenc}
\usepackage{makecell}

\counterwithout{equation}{section}

\title[Almost non-negatively curved 4- 5- and 6-manifolds]{Maximal symmetry rank and almost non-negative curvature in low dimensions}
\author{Samuel Bartel}\address{\hspace{-.1in}Department of Mathematics, Oregon State University, Corvallis, OR 97331 USA}\email{bartesam@oregonstate.edu}
\date{\today}
\subjclass{53C21}

\begin{document}
\begin{abstract}   We establish an upper bound for the symmetry rank of closed, simply connected, almost non-negatively curved manifolds of dimension $4$ through $9$ and in the case of maximal symmetry rank provide an equivariant diffeomorphism classification of this class of manifolds in dimensions 4, 5, and 6.
\end{abstract}
\maketitle

\section{Introduction}

A smooth manifold is said to be almost non-negatively curved if it admits a sequence of Riemannian metrics with lower sectional curvature bounds approaching zero and a uniform diameter upper bound, as described in Definition \ref{ANNCdef}. Via Proposition \ref{ANNC}, we see that this property is equivalent to the manifold collapsing to a point while maintaining a uniform lower sectional curvature bound. A classification of such manifolds would therefore be a key step toward understanding collapse on a broader scale. However, a classification of such manifolds is only known in dimensions 2 and 3. The former follows from the Gauss-Bonnet Theorem, and the latter is due to Shioya and Yamaguchi \cite{ShioyaYamaguchi}.

The Grove Symmetry Program seeks to classify manifolds with a lower curvature bound by additionally assuming ``large'' symmetries. Though originally introduced for positive and non-negative curvature, this course of inquiry has also proven useful for other lower curvature bounds, including almost non-negative curvature. For example, Harvey and Searle \cite{Harvey_Searle} obtain a classification in dimension four by assuming the existence of an isometric circle or $T^2$-action. We apply this principle to almost non-negatively curved manifolds of dimensions four, five, and six, yielding a new proof for dimension four.

\begin{thmA}\label{thmA}
    Let $T^k$ act smoothly and effectively on a closed, smooth, simply connected $n$-manifold $M$, $4\leq n\leq6$. Then the following hold.
	\begin{enumerate}[(1)]
		\item{$k\leq\lfloor\frac{2n}{3}\rfloor=n-2$.}
		\item{If $k=n-2$ and $M$ is an almost non-negatively curved $T^{n-2}$-manifold, then $M$ is equivariantly diffeomorphic to $\mathcal{Z}/T^m$, where 
        $$\mathcal{Z}=\begin{cases}
            S^3\times S^3 \,\,\mathrm{and}\,\, 0\leq m\leq 2,\\
            S^5  \,\, \mathrm{and} \,\, 0\leq m\leq 1, \,\,\\
            S^4\,\,\mathrm{and}\,\,m=0,\\
            \end{cases}$$
        and the $T^m$-action on any of these spaces is free and linear.} 
	\end{enumerate}
\end{thmA}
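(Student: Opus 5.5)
Part (1) is purely topological and I will argue it by induction on $n$ via fixed-point data. For a smooth effective $T^k$-action on a closed simply connected $M^n$, the principal orbits have dimension $k$, so $k\le n$. Simple connectivity excludes a free or almost free action with $M/T$ a point: if $k=n$ then $M=T^n$, which contradicts $\pi_1(M)=0$. Simple connectivity also forces a nonempty singular set, since the principal orbit map on $\pi_1$ must be killed by isotropy groups. I will refine this as follows. Pick a circle subgroup with a fixed point set component $F$, which exists because $\pi_1(M)=0$ forces isotropy to generate $T^k$. Then $T^k/S^1$ acts effectively on $F$, up to a finite kernel, and I will use the standard bound $k\le \lfloor (n+\dim F)/2\rfloor$ together with the slice representation at a point of minimal orbit dimension. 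At a fixed point of a subtorus $T^j$, the isotropy representation shows $j\le \operatorname{codim}/2$ of the fixed component, and the remaining torus acts on the orbit. This gives $k\le \lfloor 2n/3\rfloor$, in the style of the Galaz-Garc\'{\i}a--Searle argument for nonnegative curvature, which uses only topology of isotropy and simple connectivity. For $4\le n\le 6$ this is $n-2$. The key counting step is this: a point with isotropy $T^j$ has orbit of dimension $k-j$, and its slice representation embeds $T^j$ effectively in $U(\lfloor (n-k+j)/2\rfloor)$, so $j\le (n-k+j)/2$. Combined with the existence of an orbit of dimension at most $\lfloor n/3\rfloor$ in the simply connected case, this yields the bound.

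For part (2), I will use the earlier results of the paper in the following order. First, Proposition \ref{ANNC} and the almost non-negative curvature hypothesis, via Fukaya--Yamaguchi type results, show that $\pi_1$ of any orbit space stratum is virtually nilpotent and force the quotient $M/T^{n-2}$ to be a $2$-dimensional orbifold whose structure is severely restricted. The quotient $M/T$ is a $2$-disk whose boundary consists of arcs with circle isotropy and vertices with $T^2$ isotropy. The orbit space argument then reduces to a count of boundary vertices, and I expect almost non-negative curvature to bound the number of isotropy-$T^2$ vertices, or more precisely the number of circle-isotropy edges, by $n$. Then I will invoke the classification of torus manifolds and maximal-rank actions of Oh, Orlik--Raymond and Pao via a suitable earlier equivariant classification: a simply connected $T^{n-2}$-manifold with disk quotient and $r$ edges is equivariantly diffeomorphic to a free linear quotient $\mathcal{Z}/T^m$ of a product of spheres, with $m=r-n$ or so. The last step is matching cases by dimension. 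In dimension 4, $S^4$ or $S^5/S^1$, giving $\mathbb{CP}^2$, or $S^3\times S^3/T^2$, giving $S^2\times S^2$ or $\mathbb{CP}^2\#\pm\mathbb{CP}^2$. In dimension 5, $S^5$ or $S^3\times S^3/S^1$. In dimension 6, $S^3\times S^3$.

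I expect the main obstacle to be the step that converts almost non-negative curvature into a bound on the number of edges of the orbit-space polygon. Alexandrov-geometric arguments are not available directly, because the curvature bound is not uniform after collapsing. My plan is to pass to the quotient sequence $M/T$ with metrics of curvature at least $-\epsilon$ and diameter bounded. Polygons with many vertices would be hyperbolic-like 2-orbifolds with boundary, and a Gauss--Bonnet type estimate for the quotient metric, or equivalently virtual nilpotence of the fundamental group of the associated orbifold double, forces at most four corners, or at most $n$ corners in general. In dimension 6 the linear independence conditions on isotropy weights in the slice representation further restrict the configurations to the free linear ones listed.
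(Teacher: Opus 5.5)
\textbf{Part (1) is not proved.} The bound $k\le\lfloor 2n/3\rfloor$ is not a topological fact about closed simply connected manifolds. The moment-angle manifold over an $r$-gon is a closed, simply connected $(r+2)$-manifold with an effective $T^r$-action, so $k=n-2>\lfloor 2n/3\rfloor$ once $n\ge 7$. This is why Corollary B needs curvature, and why the Galaz-Garc\'ia--Searle argument you invoke uses the non-negative curvature hypothesis.

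Your counting step uses only $\dim T^k_x\le n-k$. All of its content sits in the claim that some orbit has dimension at most $\lfloor n/3\rfloor$, which you never justify. In the only case that matters for $4\le n\le 6$, namely $k=n-1$, every orbit has dimension at least $2k-n=n-2>n/3$, so that claim is the conclusion itself. Your auxiliary claim that simple connectivity forces isotropy to generate $T^k$ is also false in general: the Hopf action on $S^3$ is free.

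All you need is to exclude $k=n-1$. The paper does this as follows: any effective $T^{n-2}$-action, for instance on a subtorus, has points with $T^2$-isotropy by Kim--McGavran--Pak (Corollary \ref{sides}, Lemma \ref{vertices}). Hence that action is isotropy-maximal, and Ishida's Lemma \ref{maxtorus} forbids enlarging it. Alternatively, a $T^{n-1}$-action would have cohomogeneity one, trivial principal isotropy, and singular isotropy groups of dimension at most one. The van Kampen computation for cohomogeneity-one manifolds then gives $\pi_1(M)\neq 0$ for $n\ge 4$.

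\textbf{Part (2): the edge bound.} The decisive input is that almost non-negative curvature permits at most $4$ edges, independent of $n$ (Theorem \ref{maxsides}). You waver between $4$ and $n$. Only the bound $4$, together with the lower bound of $n-2$ edges from Lemma \ref{vertices}, leaves exactly the lune, triangle and quadrilateral.

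Your worry that Alexandrov geometry is unavailable is unfounded: each $(\overline{M},g_k)$ is an Alexandrov surface with $\curv\ge -1/k^2$ and diameter at most $1$. Your Gauss--Bonnet idea can prove the bound, but it needs a fact you omit: each vertex is a corner of angle exactly $\pi/2$, because its space of directions is $S^3/T^2$, an arc of length $\pi/2$. With that, Perelman's doubling theorem gives an Alexandrov sphere with $\curv\ge -1/k^2$ and $r$ cone points of angle $\pi$. Gauss--Bonnet for Alexandrov surfaces then yields $4\pi\ge r\pi-2k^{-2}\,\mathrm{Area}(\overline{M},g_k)$. The area is uniformly bounded by volume comparison, so $r\le 4$.

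\textbf{Part (2): the classification step.} As you state it, this step is false. A simply connected $T^{n-2}$-manifold over an $r$-gon is in general not a free linear quotient of a product of spheres. Already over a pentagon, $\mathbb{CP}^2\#2\overline{\mathbb{CP}^2}$ is not even rationally elliptic.

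Only after establishing $r\le 4$ could you use Orlik--Raymond ($n=4$) and Oh ($n=5,6$). Even then you would have to realize every admissible weighted lune, triangle and quadrilateral by a linear action on a free linear quotient of $S^4$, $S^5$ or $S^3\times S^3$, and you do not do this. The paper avoids this case check by verifying the hypotheses of the Dong--Escher--Searle classification (Theorem \ref{classify}):
\begin{enumerate}
\item rational ellipticity, from Theorem \ref{anncre};
\item local standardness, from Proposition \ref{actionlocstd};
\item isotropy-maximality;
\item contractibility of all faces.
\end{enumerate}
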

\noindent In the statement of \hyperref[thmA]{Theorem A}, $T^0$ refers to the trivial group $\{e\}$.

\begin{remark}\label{rem1}
    The manifolds that occur in Part (2) of Theorem A are equivariantly diffeomorphic to one of $S^4,S^2\times S^2, \C P^2,\C P^2\#\pm\C P^2,S^5,S^3\times S^2,S^3\tilde{\times}S^2,$ or $S^3\times S^3$, with a corresponding linear $T^{n-2}$-action. See, for example, Galaz-Garc\'ia and Searle \cite{Galaz_Garcia_Searle_2011} and Galaz-Garc\'ia and Kerin \cite{Galaz_Garcia_Kerin}.
\end{remark}

We also obtain the following corollary of \hyperref[thmA]{Theorem A}.

\begin{corB}\label{corB}
    Let $M^n$ be a closed, simply connected, almost non-negatively curved $T^k$-manifold, $7\leq n\leq9$. Then $k\leq \lfloor\frac{2n}{3}\rfloor =n-3$.
\end{corB}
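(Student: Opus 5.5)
We will argue by contradiction, reducing the problem in dimensions $7\le n\le 9$ to Theorem A. Suppose $M^n$ is closed, simply connected and almost non-negatively curved, $7\leq n\leq 9$, and carries an effective $T^k$-action with $k\geq n-2$. Since $n-3=\lfloor 2n/3\rfloor$ for $n=7,8,9$, this is exactly the situation the corollary rules out. Replacing $T^k$ by a subtorus, we may assume $k=n-2$.

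The first step is to produce a fixed point set component of small codimension. We use the standard tool for almost non-negatively curved torus manifolds: the $T^k$-action can be made isometric for a sequence of almost non-negatively curved metrics, by averaging as in the results preceding this point. Collapse-type arguments (for instance, Fukaya--Yamaguchi, combined with a lower bound on the free rank) then show that some circle or subtorus must have nonempty fixed point set. Equivalently, the action cannot be almost free, since $k=n-2$ is large compared with $n$. Once we have a circle isotropy $S^1\subset T^k$ with fixed point component $F$, the standard isotropy counting gives the following: $T^k/S^1=T^{k-1}$ acts effectively on $F$, and $F$ has codimension at least $2$.

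The second step is to pass to a submanifold of dimension at most $6$. Iterating the first step, or choosing an isotropy subtorus $T^j$ of maximal dimension with nonempty fixed set $F$, we get an effective action of $T^{k-j}$ on $F$, where $\dim F\leq n-2j$. The aim is to reach a simply connected closed $F$ of dimension $m$ with $4\leq m\leq 6$, or smaller, carrying an effective torus action of rank greater than $m-2$. Such an $F$ contradicts Theorem A(1), which holds for \emph{any} smooth effective action, so no curvature hypothesis is needed on $F$. Alternatively, if the quotient bound $k-j\le \lfloor 2\dim F/3\rfloor$ on $F$ is used together with $j\le (n-\dim F)/2$, we obtain
\[
k\le \tfrac{n-\dim F}{2}+\tfrac{2\dim F}{3}\le \lfloor \tfrac{2n}{3}\rfloor ,
\]
since a fixed-point component can only absorb roughly half of its codimension in rank.

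The main obstacle is establishing the existence of enough isotropy, and ensuring that the fixed point components are simply connected so that Theorem A applies to them. For simple connectivity, we would use that the orbit space $M/T^k$ is simply connected together with the structure of isotropy in almost non-negative curvature. For the existence of isotropy, we would first try the free rank bound: an almost free $T^r$-action on an almost non-negatively curved simply connected manifold forces $r\le n/2$, or smaller. This would be obtained from the rational ellipticity or Euler characteristic arguments available for such manifolds, and the gap between $n/2$ and $n-2\ge 5$ then guarantees large isotropy.
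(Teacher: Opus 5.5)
There is a genuine gap, and it is structural: your argument never uses almost non-negative curvature where it is actually needed. You invoke curvature only to force isotropy. But for $k=n-2$, isotropy is automatic for every smooth effective action on a simply connected $M^n$. By Theorem \ref{ospace} and Corollary \ref{sides}, $\overline{M}$ is a disk whose boundary consists of $T^1$- and $T^2$-isotropy orbits, with at least $n-2$ distinct circle isotropy groups. The step meant to produce the contradiction, applying Theorem A(1) to a fixed point component $F$, is also curvature-free.

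So if your reduction worked, it would prove $k\le n-3$ for arbitrary smooth effective torus actions on simply connected $7$-, $8$- and $9$-manifolds. That is false. The moment-angle manifold of a pentagon, $\#_5(S^3\times S^4)$, is a simply connected $7$-manifold with an effective $T^5$-action, and hexagons and heptagons give $n=8,9$.

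The concrete failure point is the simple connectivity of $F$.
\begin{itemize}
\item For an edge circle $T^1$, the component $F$ is the $(n-2)$-dimensional preimage of an edge. On it $T^{n-3}$ acts with cohomogeneity one, and typically $F\cong S^3\times T^{n-5}$. Already in $S^3\times S^3\subset\C^2\times\C^2$ with its linear $T^4$-action, the circle rotating $z_1$ fixes $S^1\times S^3$, which carries an effective $T^3$. So ``rank $>\dim F-2$'' contradicts nothing once $\pi_1(F)\neq 0$.
\item If you take $j$ maximal, i.e.\ $j=2$, then $F$ is a minimal orbit $T^{n-4}$ (Lemma \ref{minisolated}), on which $T^{n-4}$ acts transitively. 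This also shows that your alternative bound $k-j\le\lfloor 2\dim F/3\rfloor$, which is just the conjecture applied to $F$, fails for the components that actually occur.
\item For $n=9$ the edge components are $7$-dimensional, outside the range of Theorem A altogether.
\end{itemize}
Your fallback free-rank bound via rational ellipticity would rest on the Bott conjecture, which is open beyond cohomogeneity two. Also, averaging a metric over $T^k$ does not preserve lower curvature bounds. You do not need it, since by definition an almost non-negatively curved $T^k$-manifold has the action isometric for every $g_k$.

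The missing idea is the Grove--Wilking--Yeager obstruction, Theorem \ref{maxsides}, which is exactly where curvature enters. The paper argues directly on the orbit space.
\begin{enumerate}
\item After passing to a subtorus (still isometric for each $g_k$), take $k=n-2$. The action then has cohomogeneity two and $\overline{M}$ is a $2$-disk.
\item By Lemma \ref{vertices}, which follows from the count of distinct circle isotropy groups in Corollary \ref{sides}, $\partial\overline{M}$ has at least $n-2\ge 5$ edges.
\item Theorem \ref{maxsides} forbids an almost non-negatively curved $G$-manifold of cohomogeneity at most two from having orbit space a disk with more than four edges.
\end{enumerate}
This contradiction gives $k\le n-3$. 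It also explains why the moment-angle examples above, whose orbit spaces are $(n-2)$-gons, carry no invariant almost non-negatively curved sequence.
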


\begin{remark}
    There are known examples of almost non-negatively curved $n$-manifolds, $7\leq n\leq 9$, which admit symmetry rank $n-3$. For example, $S^3\times S^4$, $S^3\times S^5$, and $S^3\times S^3\times S^3$ each admit a non-negatively curved metric invariant under a $T^{4}$-, $T^5$-, and $T^6$-action, respectively. Therefore $n-3$ is the maximal symmetry rank for these dimensions.
\end{remark}

A longstanding conjecture of Bott relates non-negative curvature to the topology of a closed, simply connected Riemannian manifold. In particular, the conjecture states that such a manifold $M$ is rationally elliptic, that is, $\dim \pi_*(M)\otimes\mathbb{Q}<\infty$ as a rational vector space. \hyperref[thmA]{Theorem A} provides evidence to support the following extension of the conjecture to manifolds of almost non-negative curvature due to Grove and Halperin \cite{Grove_2002}, as all of the manifolds listed in Remark \ref{rem1} are known to be rationally elliptic.

\begin{BC}\cite{Grove_2002}
    Let $M$ be a closed, simply connected, almost non-negatively curved manifold. Then $M$ is rationally elliptic.
\end{BC}

\begin{remark}
    The 4- and 5-manifolds obtained in \hyperref[thmA]{Theorem A} are the only rationally elliptic manifolds in those dimensions up to diffeomorphism by Paternain and Petean \cite{Paternain_Petean}. However, the class of rationally elliptic 6-manifolds contains manifolds other than $S^3\times S^3$, see Hermann \cite{Hermann}.
\end{remark}

Theorem A and Corollary B also provide evidence for extending the Maximal Symmetry Rank Conjecture for non-negative curvature as stated in Escher and Searle \cite{Escher_Searle} to almost non-negative curvature. Namely, we make the following conjecture.

\begin{MSRC}
Let $M^n$ be a closed, simply connected, almost non-negatively curved $T^k$-manifold. Then  the following hold:
\begin{enumerate}
\item 
$k\leq \lfloor \frac{2n}{3}\rfloor$; and 

\item When $k= \lfloor \frac{2n}{3}\rfloor$, $M^n$ is equivariantly diffeomorphic to  
$\mathcal{Z}/T^m$ with a linear $T^k$-action,  where 
$$\mathcal{Z}=  \prod_{i\leq r} S^{2n_i-1} \times\prod_{i>r} S^{2n_i},$$
 with  $n_i\geq 2,  \,\,0 \leq m \leq 2n \mod 3,$
and the $T^m$-action on $\mathcal{Z}$ is  free and linear.
 \end{enumerate}
\end{MSRC}

\subsection{Organization} The organization of the paper is as follows. In Section 2, we introduce definitions and notation. In Section 3, we prove \hyperref[thmA]{Theorem A} and \hyperref[corB]{Corollary B}.

\subsection{Acknowledgements} The author was partially supported by Catherine Searle's NSF Grant DMS-2204324. The author thanks his current PhD advisors Christine Escher and Catherine Searle for their helpful comments. This article is based on work from the author's master's thesis \cite{Bartel}, completed at Wichita State University under the supervision of Catherine Searle. 

\section{Preliminaries}

In Section 2.1 we introduce definitions and notation for compact transformation groups on closed manifolds. In Section 2.2 we introduce topological results for smooth torus actions. Section 2.3 provides definitions and notation for Alexandrov spaces. Finally, Section 2.4 introduces results that depend on the geometry of almost non-negative curvature.

\subsection{Transformation groups}

Let $G$ be a compact Lie group acting on a closed smooth manifold $M$. The \textit{isotropy group} at $x\in M$ is the group $G_x=\{g\in G\mid g\cdot x=x\}$, and the \textit{orbit} of $x$ is the set $G(x)=\{y\in M\mid y=g\cdot x\text{ for some }g\in G\}$. The $G$-action is \textit{effective} if $\bigcap_{x\in M}G_x=\{e\}$. An orbit is \textit{principal} if its isotropy group is minimal with respect to containment of conjugates. Orbits are called \textit{exceptional} if they are not principal but have isotropy group of the same dimension as a principal orbit. Orbits which are not principal or exceptional are called \textit{singular}.

Given a smooth action of a compact Lie group, $G$, on a smooth manifold $M$, the \textit{orbit space} is $\overline{M}:=M/G$, where $x\sim y$ if $y\in G(x)$. If $M$ is equipped with a Riemannian metric, $\overline{M}$ inherits the orbital distance metric $\overline{d}(\overline{x},\overline{y}):=\min\{d_M(p_1,p_2)\mid p_1\in G(x),p_2\in G(y)\}$. The \textit{orbital projection map} $\pi:M\to \overline{M}$ is defined by $\pi(x)=\overline{x}$. Similarly, we denote $\pi(A)=\overline{A}$ for an arbitrary set $A\subset M$.

We say an action is of \textit{cohomogeneity $m$} if the orbit space is $m$-dimensional. By work of Mostert \cite{Mostert}, the class of cohomogeneity one manifolds is in one-to-one correspondence with cohomogeneity one group diagrams, which are ordered tuples recording isotropy information. When $\overline{M}$ is homeomorphic to an interval, the \textit{cohomogeneity one group diagram} is the ordered quadruple $(G,H,K_1,K_2)$, where $H$ is the principal isotropy group corresponding to interior points of $\overline{M}$ and $K_1$ and $K_2$ are the isotropy groups corresponding to the endpoints of $\overline{M}$.

The \textit{fixed point set} of the $G$-action is denoted
$$M^G:=\{x\in M\mid g\cdot x=x\text{ for all }g\in G\},$$
and $\dim(M^G)$ is defined to be the maximal dimension among components of $M^G$. The dimension of $\overline{M}$ is constrained by that of $M^G$, in particular $\dim(\overline{M})\geq\dim(M^G)+1$ for nontrivial $G$-actions. In the case where $\dim(\overline{M})=\dim(M^G)+1$ the action is said to be \textit{fixed-point homogeneous}. Grove and Searle \cite{Grove_Searle_97} classify positively curved, simply connected, fixed-point homogeneous manifolds. Non-negatively curved, simply connected, fixed-point homogeneous manifolds are classified in Galaz-Garc\'ia \cite{Galaz_Garcia_2012} for dimensions three and four and Galaz-Garc\'ia and Spindeler \cite{Galaz_Garcia_Spindeler} for dimension five.

We now recall the Slice Theorem, which provides a description of the local structure near each orbit. There are many formulations of this theorem, see for example Bredon \cite{Bredon} or Grove \cite{Grove_2002}.

\begin{ST}\label{ST}
    Let $G$ be a compact Lie group acting smoothly on $M$. Then for any $x\in M$, a sufficiently small tubular neighborhood of $G(x)$ is equivariantly diffeomorphic to $G\times_{G_x}D_x^\perp$, where $D^\perp_x$ is the unit normal disk to the orbit $G(x)$ at $x$.
\end{ST}
\noindent In particular, when the $G$-action is effective the isotropy group $G_x$ also acts effectively on $S^\perp_x$, the normal sphere to the orbit $G(x)$ at $x$. 

Recall that when $M$ is a Riemannian manifold, we say that $G$ \textit{acts by isometries} if the $G$-action is an isometry for each element of $G$. Theorem 5.1 in Chapter II of Kobayashi \cite{Kobayashi} shows that the fixed point set of an isometric action admits a rich structure of its own.
\begin{theorem}\cite{Kobayashi}\label{fix}
    Let $G$ act on $(M,g)$ by isometries, and let $F$ be the set of points fixed by this action. Then each connected component of $F$ is a closed, totally geodesic submanifold of $M$.
\end{theorem}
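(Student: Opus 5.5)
The plan is to prove Theorem \ref{fix} with the exponential map. Fix $p\in F$ and work in a normal neighborhood. Since each $g\in G$ is an isometry fixing $p$, it commutes with the exponential map:
$$g(\exp_p v)=\exp_p(dg_p v).$$
Let $V=\{v\in T_pM \mid dg_p v=v \text{ for all } g\in G\}$, a linear subspace of $T_pM$.

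The first step is to show that, for $\epsilon$ less than the injectivity radius at $p$, we have $F\cap B_\epsilon(p)=\exp_p(V\cap B_\epsilon(0))$. The inclusion ``$\supseteq$'' follows directly from the commuting relation. For ``$\subseteq$'', take $q\in F\cap B_\epsilon(p)$ and write $q=\exp_p v$ with $|v|<\epsilon$. Then $\exp_p(dg_p v)=g(q)=q$. Since $|dg_p v|=|v|<\epsilon$ and $\exp_p$ is injective on the $\epsilon$-ball, $dg_p v=v$, so $v\in V$.

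Consequently, near each of its points, $F$ is the image of a linear subspace under a diffeomorphism. This makes $F$ an embedded submanifold. Its dimension is locally constant, so each connected component is a submanifold, possibly of varying dimension across components. Closedness is immediate because
$$F=\bigcap_{g\in G}\{x\mid g(x)=x\}$$
is an intersection of closed sets. When $M$ is closed, as throughout the paper, the components are therefore compact.

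For total geodesicity, take $v\in V$. The geodesic $\gamma(t)=\exp_p(tv)$ satisfies $g\circ\gamma=\gamma$ for all $t$, since $g$ maps geodesics to geodesics and $g\circ\gamma$ has the same initial data as $\gamma$. Hence every geodesic tangent to $F$ stays in $F$ for all time, and by the same argument globally. This is exactly the totally geodesic property.

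The main subtlety is the injectivity step in the first claim, that is, working within the injectivity radius so that fixed points correspond to fixed vectors. I expect this to be routine. Since the paper cites Kobayashi, one may simply invoke that reference.
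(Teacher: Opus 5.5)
Your argument is correct and is essentially the standard exponential-map proof of Theorem 5.1 in Chapter II of Kobayashi, which the paper invokes by citation rather than reproving. Within the injectivity radius, fixed points near $p$ correspond exactly to the common fixed subspace $V\subset T_pM$, which gives the submanifold structure with $T_pF=V$, and geodesics with initial velocity in $V$ are pointwise fixed. The one step you leave implicit is harmless: each component of $F$ is closed in $M$ because components of a space are closed in it, and $F$ itself is closed.
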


Since the action of $G_x$ on the unit normal disk is effective, when $G= T^k$, isotropy groups have the property that $\dim(G_x)\leq n-k$. The case of equality corresponds to a \textit{minimal orbit}, and a $T^k$-action with such an orbit is called \textit{isotropy-maximal}.

\begin{definition}[Isotropy-maximal action]
	Let $M^n$ be a connected manifold with an effective $T^k$-action. The $T^k$-action on $M^n$ is \textnormal{isotropy-maximal} if either of the following equivalent conditions hold:
	\begin{enumerate}[(1)]
		\item{There is a point $x\in M$ such that the dimension of the isotropy subgroup is $n-k$, that is, $\dim(T^k_x)=n-k$; or}
		\item{There is a point $x\in M$ such that $\dim(T^k(x))=2k-n$, in which case the orbit $T^k(x)$ through $x\in M$ is called a \textnormal{minimal orbit}.}
	\end{enumerate}
\end{definition}

\noindent Lemma 2.3 in Ishida \cite{Ishida_2019} describes useful facts about minimal orbits, one of which is the following.

\begin{lemma}\cite{Ishida_2019}\label{minisolated}
    Let $M$ be a connected manifold equipped with an isotropy-maximal action by a compact torus $G$. Then each minimal orbit is isolated.
\end{lemma}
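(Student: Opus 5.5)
The lemma says that for an isotropy-maximal action of a compact torus $G=T^k$ on a connected manifold $M^n$, each minimal orbit is isolated. The plan is to work locally via the Slice Theorem. Fix $x$ on a minimal orbit, so $\dim G_x=n-k$ and $\dim G(x)=2k-n$. The slice at $x$ is the normal disk $D^\perp_x$, of dimension
$$n-(2k-n)=2(n-k).$$
By the Slice Theorem, a tubular neighborhood of $G(x)$ is $G\times_{G_x}D^\perp_x$. Orbits in this neighborhood correspond to $G_x$-orbits in $D^\perp_x$, and the orbit types are governed by the isotropy representation of $G_x$ on $D^\perp_x$. It therefore suffices to show that the origin is the only point of $D^\perp_x$ whose $G$-isotropy has dimension $n-k$. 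Then every other orbit in the tube has strictly smaller isotropy, hence strictly larger dimension, and $G(x)$ is isolated among minimal orbits.

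The key step is to analyze the representation. Write $H=G_x^0\cong T^{n-k}$, the identity component of $G_x$. As noted after the Slice Theorem, $G_x$ acts effectively on $S^\perp_x$, so $H$ acts effectively and linearly (after choosing an invariant metric) on $\R^{2(n-k)}$. A faithful real representation of a torus $T^r$ on $\R^{2r}$ splits into nontrivial irreducible $2$-dimensional weight spaces plus a trivial summand. Faithfulness forces the nonzero weights to span a rank-$r$ lattice, and there are at most $r$ complex summands in dimension $2r$. Hence there are exactly $r$ of them, with no trivial summand, and the weights $\alpha_1,\dots,\alpha_r$ are linearly independent over $\mathbb{Q}$.

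Now take a point $v=(v_1,\dots,v_r)\neq 0$ in the slice. Its $H$-isotropy is $\bigcap_{v_i\neq 0}\ker\alpha_i$, which has codimension equal to $\#\{i: v_i\neq0\}\geq 1$, by linear independence. The $G$-isotropy of the corresponding point $[e,v]$ in the tube is $(G_x)_v$, and its dimension equals that of $H_v$, which is strictly less than $n-k$. So no point of the tube other than those on $G(x)$ has isotropy of dimension $n-k$. This means $G(x)$ is isolated, and in fact $\overline{x}$ is an isolated point of the set of minimal orbits in $M/G$.

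The main obstacle is the representation-theoretic step: showing there is no trivial summand and the weights are independent. The point to be careful about is that effectiveness of the finite-extended group $G_x$ implies effectiveness of $H$. This holds because a subgroup of a group acting effectively still acts effectively. Beyond that, the argument is just a dimension count on weights.
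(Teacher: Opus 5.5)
Your proof is correct and follows the standard slice-representation argument behind the cited Lemma 2.3 of Ishida; the paper itself only quotes the result. The argument runs: the slice representation of $G_x$ is faithful of real dimension $2\dim G_x$, so up to automorphism it is the standard $T^r$-action on $\mathbb{C}^r$, and only the origin of the slice has isotropy of full dimension. One step should be made explicit: effectiveness of $G_x$ on $D^\perp_x$ uses that $G$ is abelian. An element of $G_x$ acting trivially on the slice then acts trivially on the open tube $G\times_{G_x}D^\perp_x$, and, being an isometry for an invariant metric, therefore acts trivially on all of the connected $M$. Your passage to the identity component $H$ and comparison of dimensions also avoids needing to know that $G_x$ is connected.
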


\noindent In addition to having a maximal dimension isotropy subgroup, these actions are maximal with respect to containment for effective torus actions, as seen in Lemma 2.2 of \cite{Ishida_2019}.

\begin{lemma}\cite{Ishida_2019}\label{maxtorus}
	Let $M$ be a connected manifold and let $T^k$ act effectively on $M$. If $T^l$ is a subtorus of $T^k$ such that the action restricted to $T^l$ is isotropy-maximal, then $T^k=T^l$. In particular, $k=l$.
\end{lemma}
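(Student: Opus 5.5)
Since $T^l\subseteq T^k$, we have $l\le k$, and it suffices to show $k\le l$. Once $k=l$, the closed connected subgroup $T^l$ of the connected group $T^k$ has the same dimension, so $T^l=T^k$. The plan is therefore to compare the dimension bound for isotropy groups of the effective $T^k$-action with the isotropy-maximality of the restricted $T^l$-action at a single point.

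Because the $T^l$-action is isotropy-maximal, there is a point $x\in M$ with $\dim(T^l_x)=n-l$. Equivalently, the orbit $T^l(x)$ has dimension $l-(n-l)=2l-n$. Now consider the larger isotropy group $T^k_x\supseteq T^l_x$. The key inequality is the effectiveness bound recalled above: for an effective $T^k$-action, $\dim(T^k_x)\le n-k$. This holds because $T^k_x$ acts effectively on the normal sphere $S^\perp_x$ by the Slice Theorem, and this action is through a torus acting effectively on a representation of dimension $n-\dim T^k(x)$, which forces $2\dim T^k_x\le n-\dim T^k(x)=n-k+\dim T^k_x$. Applying it gives $n-l=\dim(T^l_x)\le\dim(T^k_x)\le n-k$, hence $k\le l$, and we are done.

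The main obstacle is justifying that the bound $\dim(T^k_x)\le n-k$ genuinely applies. The slice representation of a torus $T^k_x$ on $\R^{n-\dim T^k(x)}$ is effective, so its identity component embeds in a maximal torus of $SO(n-\dim T^k(x))$, giving $\dim T^k_x\le \tfrac12(n-\dim T^k(x))$. Substituting $\dim T^k(x)=k-\dim T^k_x$ yields exactly $\dim T^k_x\le n-k$. The one subtlety is that effectiveness of the global action must pass to effectiveness of the slice representation of the isotropy group. This follows from the Slice Theorem: an element acting trivially on the slice fixes a neighborhood of $x$, and since it commutes with the torus it fixes an open set. A torus element fixing an open set acts trivially on the connected manifold $M$, because its fixed set is a closed totally geodesic submanifold for an invariant metric (Theorem \ref{fix}), and such a submanifold containing an open set must be all of $M$.
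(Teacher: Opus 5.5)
Your proof is correct and follows the standard argument behind the cited Lemma 2.2 of Ishida, which the paper does not reprove: at a point with $\dim T^l_x=n-l$, the containment $T^l_x\subseteq T^k_x$ together with the bound $\dim T^k_x\le n-k$ (which the paper records just before defining isotropy-maximal actions) forces $k\le l$. You justify that bound through faithfulness of the slice representation, using commutativity of the torus to spread triviality on the slice to an open tube. This is the right justification, and it is more careful than the paper's remark after the Slice Theorem, which asserts faithfulness for arbitrary compact $G$; that can fail, e.g.\ for $SO(3)$ acting on $S^2\times S^1$ through the first factor.
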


\noindent \textit{Locally standard} is another notion of structure for a torus action, see for example Wiemeler \cite{Wiemeler}.

\begin{definition}[Locally standard action]\label{locstd}
	A $T^k$-action on $M^n$ is called \textnormal{locally standard} if for each point $x\in M$, there is a neighborhood of $x$ in $M$ which is $T^k$-equivariantly diffeomorphic to
	$$T^r\times W\times\R^m,$$
	where $r=k-\dim(T^k_x)$, $W$ is a faithful $T^k_x$-representation of real dimension $2\dim(T^k_x)$, and $T^k\cong T^r\times T^k_x$ acts trivially on $\R^m$, $T^r$ acts trivially on $W$, and $T^k_x$ acts trivially on $T^r$.
\end{definition}

\subsection{Topological results}
We first recall several results concerning smooth torus actions of cohomogeneity two, beginning with Theorem 1.3 from Kim, McGavran, and Pak \cite{Kim}.
\begin{theorem}\cite{Kim}\label{ospace}
	Let $T^{n-2}$ act effectively on a closed, simply connected manifold $M^n$, $n\geq4$. Then $\overline{M}$ is homeomorphic to $D^2$, interior points of $\overline{M}$ correspond to orbits with trivial isotropy, and boundary points of $\overline{M}$ correspond to orbits with $T^1$- or $T^2$-isotropy.
\end{theorem}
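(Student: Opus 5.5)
The plan is to determine the orbit space topologically, using general facts about compact transformation groups together with simple connectivity of $M$. First I would show that every isotropy group is connected. Suppose some point $x$ had a finite, nontrivial isotropy group $\Gamma=T^{n-2}_x$ or, more generally, a disconnected one. Then by the Slice Theorem, a neighborhood of the orbit has the form $T^{n-2}\times_{\Gamma}D^2_x$. Standard results (Bredon, Chapter IV) then apply: for a torus action on a simply connected manifold, the orbit space is simply connected, and there are no exceptional orbits in codimension $2$ of the orbit space. The key input is the argument that $\pi_1(M)\to\pi_1(\overline M)$ is surjective, together with the fact that a finite isotropy group would yield a nontrivial covering over a neighborhood. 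So I would first record that $\overline{M}$ is a simply connected $2$-dimensional space. It is a topological $2$-manifold with boundary, because the slice representations of $T^k$-isotropy groups on $D^\perp_x$ have quotients homeomorphic to half-planes, quarter-planes, or planes (cones over $S^1/\Gamma$ or over intervals).

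Second, I would classify the local models by isotropy dimension. The normal space $D^\perp_x$ has dimension $2+\dim T^{n-2}_x$, and $T^{n-2}_x$ acts effectively on it. For $\dim T_x=0$ the slice is $D^2$ with a finite group acting effectively by rotations. A nontrivial rotation gives an orbifold cone point in the interior of $\overline M$. This is the exceptional orbit case, which I would rule out using simple connectivity, via the lemma that the principal isotropy is trivial and that $M\to\overline M$ restricted to the regular part is a principal bundle whose fibre-inclusion kills $\pi_1(T^{n-2})$. For $\dim T_x=1$, $T^1$ acts effectively on $D^3=D^2\times D^1$ with quotient a half-disk, so these points lie on the boundary. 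For $\dim T_x=2$, $T^2$ acts effectively on $D^4=\mathbb{C}^2$ with quotient a quarter-plane, so these are corner points. By effectiveness, $\dim T_x\le 2$. Hence $\overline M$ is a compact surface with boundary, and the boundary consists exactly of the nonfree orbits.

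Third, I would show that $\overline M\cong D^2$. Since $M$ is simply connected, $\overline M$ is simply connected, so it is $S^2$ or $D^2$. If the boundary were empty, all orbits would be principal, so $M\to S^2$ would be a principal $T^{n-2}$-bundle. The homotopy sequence then gives $\pi_2(S^2)=\mathbb Z\to\pi_1(T^{n-2})=\mathbb Z^{n-2}\to\pi_1(M)=0$, which forces $n-2\le 1$, contradicting $n\ge4$. Hence $\overline M\cong D^2$.

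The main obstacle I expect is excluding finite (exceptional) isotropy in the interior and the corresponding finite extensions on the boundary. My first attempt would be the following. Take a small loop around a cone point in $\overline M$. Its preimage is a lens-space-like bundle, and the generator of $\Gamma$ determines a loop in a principal orbit. That loop would have to be nullhomotopic in $M$. Using van Kampen on the decomposition of $M$ into the principal part and the tubes around singular strata, one computes $\pi_1(M)$ as $\mathbb Z^{n-2}$ modulo the circle subgroups generated by the boundary isotropies. An exceptional point adds only a finite-index relation and leaves a nontrivial torsion quotient unless the configuration is trivial, which would contradict $\pi_1(M)=0$. This is essentially the argument of Kim–McGavran–Pak, which I would reproduce.
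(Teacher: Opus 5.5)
The paper does not prove this statement (it is quoted from Kim--McGavran--Pak), so your proposal has to stand on its own. Several parts of your framework are fine: the slice-theorem local models, simple connectivity of $\overline{M}$, and the exclusion of $\overline{M}\cong S^2$ \emph{in the absence of exceptional orbits} via $\pi_2(S^2)\to\pi_1(T^{n-2})\to\pi_1(M)$.

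The gap is the step you flag yourself: excluding exceptional orbits. This is the real content of the theorem.

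The ``standard result'' you invoke is false. Take $t\cdot(z,w)=(t^pz,t^qw)$ on $S^3$ with $\gcd(p,q)=1$ and $p,q\geq 2$. This is an effective circle action on a simply connected manifold, and its orbit space $S^2$ has two cone points with isotropy $\mathbb{Z}_p,\mathbb{Z}_q$. So simple connectivity alone cannot rule out exceptional orbits; the argument must use the boundary or $n\geq 4$, and your sketch uses neither correctly:

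Gluing in a tube $T^{n-2}\times_{\mathbb{Z}_k}D^2$ does not add a ``finite-index relation'' to $\mathbb{Z}^{n-2}$. It adds a new generator $c$, the lift of a small loop around the cone point, subject to $c^k\in\mathbb{Z}^{n-2}$. So $\pi_1(M)$ is no longer a quotient of $\mathbb{Z}^{n-2}$.

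Extra relations on $\mathbb{Z}^{n-2}/\langle\lambda_j\rangle$ can only shrink it. That group is already trivial when the circle isotropies $\lambda_j$ generate, so no contradiction can come from there.

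``That loop would have to be nullhomotopic'' is no contradiction in a simply connected space.

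There is also a circularity. Your exclusion of $S^2$ assumes there are no exceptional orbits, while your exceptional-orbit argument is phrased through boundary isotropies, i.e.\ it presupposes the disk case.

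A correct version goes as follows. The image of $\pi_1(T^{n-2})$ in $\pi_1(M)$ is central. Trivialize the principal bundle over the complement in $\overline{M}$ of a collar of $\partial\overline{M}$ (if any) and of the cone points $p_1,\dots,p_m$, with orders $k_i$. Then glue in the exceptional tubes, whose boundaries surject on $\pi_1$, and the collar preimage, which only kills the $\lambda_j$.

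In the disk case this gives $\pi_1(M)/\mathrm{im}\,\pi_1(T^{n-2})\cong\mathbb{Z}_{k_1}*\cdots*\mathbb{Z}_{k_m}$, which forces $m=0$.

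In the sphere case this quotient is the orbifold group of $S^2(k_1,\dots,k_m)$, which can be trivial, as in the $S^3$ example. So instead abelianize: $H_1(M)$ is $\mathbb{Z}^{n-2+m}$ modulo $m+1$ relations, hence has rank at least $n-3\geq 1$. This is exactly where $n\geq 4$ is needed.

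Finally, connectedness of isotropy at boundary points is asserted but not proved. Slice representations alone allow $T^1\times\mathbb{Z}_2$ acting by $-1$ on the trivial line in $\R^3$ (a corner of $\overline{M}$), and reflections in the finite case. These are excluded because $M$ is orientable, and any $g\in T^{n-2}_x$ is isotopic to the identity and fixes the orbit $T^{n-2}(x)$ pointwise. Hence $g$ acts orientation-preservingly on the slice.
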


\noindent We refer to the projection of an orbit with $T^2$-isotropy as a \textit{vertex} of $\overline{M}$ and the segment of boundary between consecutive vertices as an \textit{edge}. Since vertices correspond to minimal orbits, they are isolated by Lemma \ref{minisolated}. We now recall Corollary 1.7 from \cite{Kim}, which provides a lower bound on the number of distinct circle isotropy subgroups.

\begin{corollary}\cite{Kim} \label{sides}
	Let $T^{n-2}$ act effectively on $M^n$, a closed, simply connected $n$-manifold. Then all isotropy subgroups generate the whole $T^{n-2}$, and there are at least $(n-2)$ different circle isotropy subgroups of $T^{n-2}$.
\end{corollary}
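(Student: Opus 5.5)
The plan is to prove the two assertions in order, using Theorem \ref{ospace} throughout to control which isotropy groups can occur: trivial isotropy in the interior of $\overline{M}\cong D^2$, and $T^1$- or $T^2$-isotropy on the boundary.

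\emph{Generation.} Let $H\subseteq T^{n-2}$ be the closed subgroup generated by all isotropy subgroups, and suppose for contradiction that $H\neq T^{n-2}$.
\begin{enumerate}[(i)]
\item Since $T^{n-2}/H$ is a nontrivial compact connected abelian Lie group, it is a torus $T^r$ with $r\geq 1$.
\item The $T^{n-2}$-action descends to $M/H$, and $T^{n-2}/H$ acts freely there. This holds because every isotropy group of the original action lies in $H$.
\item Hence $M/H\to M/T^{n-2}=\overline{M}\cong D^2$ is a principal $T^r$-bundle. As $D^2$ is contractible, the bundle is trivial, so $M/H\cong D^2\times T^r$.
\item Composing gives a continuous equivariant map $f\colon M\to T^r$. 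On any orbit $T^{n-2}(x)$ with trivial isotropy, $f$ restricts, after a translation, to the quotient homomorphism $T^{n-2}\to T^{n-2}/H$.
\item That homomorphism is surjective on $\pi_1$ up to finite index. Indeed, $H$ has dimension $<n-2$, so $\pi_1(T^{n-2})\to\pi_1(T^r)\cong\mathbb{Z}^r$ has finite-index, hence nonzero, image.
\item On the other hand, the map $T^{n-2}\to T^{n-2}(x)\hookrightarrow M\to T^r$ factors through $\pi_1(M)=0$. This is a contradiction, so $H=T^{n-2}$.
\end{enumerate}
The delicate point is that $H$ may be disconnected. I avoid needing $\pi_1$-surjectivity of $T\to T/H$ by using only the finite-index statement, which suffices.

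\emph{Counting circle isotropy.} Every nontrivial isotropy group is either a circle or a $T^2$ at a vertex.
\begin{enumerate}[(i)]
\item At a vertex $x$ with $T^{n-2}_x\cong T^2$, the Slice Theorem shows that $T^2$ acts effectively on the normal space, which is $4$-dimensional because the orbit has dimension $n-4$.
\item An effective $T^2$-action on $\R^4$ is equivalent to the standard action on $\C^2$. Its nontrivial isotropy groups are therefore the two coordinate circles, which generate $T^2$.
\item These two circles are realized as isotropy groups of nearby points of $M$, namely the points on the two adjacent edges. So each $T^2$-isotropy group lies in the subgroup generated by circle isotropy groups.
\item By the first part, the circle isotropy subgroups alone generate $T^{n-2}$.
\item The closed subgroup generated by $m$ circles is the image of $T^m$ under a homomorphism, so it has dimension at most $m$. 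Generating $T^{n-2}$ therefore requires at least $n-2$ distinct circles.
\end{enumerate}

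The main obstacle is step (iii) of the generation argument: making precise that $M/H\to\overline{M}$ is a genuine principal bundle, so that it trivializes. This needs a short slice-theorem check that $T^{n-2}/H$ acts freely with local sections on $M/H$. A fallback is to avoid the bundle entirely and build $f$ by lifting the contractible base directly.
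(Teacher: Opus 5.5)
Your proof is correct. The paper gives no argument of its own here, since Corollary \ref{sides} is quoted from \cite{Kim}. What you supply is therefore a self-contained derivation from Theorem \ref{ospace}, and it shows exactly where the hypotheses enter. Generation uses only $\pi_1(M)=0$ together with contractibility of $\overline{M}\cong D^2$, which trivializes the free $T^{n-2}/H$-bundle $M/H\to\overline{M}$. The count is then a pure dimension argument, once you see that each $T^2$-isotropy group is generated by the two circle isotropy groups on its adjacent edges. Contractibility is genuinely needed: for the free Hopf action of $T^1$ on $S^3$, all isotropy groups are trivial and $S^3\to S^2$ is a nontrivial bundle. This example also shows that the corollary as quoted tacitly requires $n\geq 4$. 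You import that hypothesis through Theorem \ref{ospace}, and you should say so explicitly.

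Two small points.
\begin{enumerate}
\item The step you flag as the main obstacle is routine. Since $T^{n-2}_x\subseteq H$, the map $[g,v]\mapsto (gH,\, T^{n-2}_x v)$ identifies $(T^{n-2}\times_{T^{n-2}_x}D^\perp_x)/H$ equivariantly with $(T^{n-2}/H)\times (D^\perp_x/T^{n-2}_x)$. This gives local trivializations of $M/H\to\overline{M}$ directly; alternatively, cite Gleason's theorem on free actions of compact Lie groups.
\item In step (v) of the generation argument, your wording conflates two separate facts. Finite index comes from the homotopy sequence of $H\to T^{n-2}\to T^{n-2}/H$, which identifies the cokernel of $\pi_1(T^{n-2})\to\pi_1(T^{n-2}/H)$ with the finite group $\pi_0(H)$. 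The inequality $\dim H<n-2$ only gives $r\geq 1$, which is what makes the finite-index image nonzero.
\end{enumerate}
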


\noindent In fact, the number of distinct circle isotropy subgroups provides a lower bound on the number of vertices in the orbit space, since two adjacent arcs in $\partial\overline{M}$ corresponding to orbits fixed by distinct circle subgroups have intersection corresponding to orbits fixed by the $T^2$ generated by these two subgroups. Thus, the intersection consists of vertices. Since there are at least $n-2$ arcs by Corollary \ref{sides}, there must be at least $n-2$ corresponding vertices. Hence, the following lemma holds.

\begin{lemma}\label{vertices}
    Under the conditions of Corollary \ref{sides}, there are at least $n-2$ vertices in $\overline{M}$.
\end{lemma}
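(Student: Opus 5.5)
The plan is to count edges of $\partial\overline{M}$ and then use the fact that the endpoints of an edge are vertices. By Theorem \ref{ospace}, $\overline{M}$ is homeomorphic to $D^2$, so $\partial\overline{M}$ is a circle. Every point of this circle is the image of an orbit with isotropy $T^1$ or $T^2$. Vertices (images of $T^2$-isotropy orbits) are isolated by Lemma \ref{minisolated}, and the boundary circle is compact, so there are only finitely many vertices.

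First we rule out the case with no vertices. Then all of $\partial\overline{M}$ consists of orbits with circle isotropy. By the Slice Theorem, the isotropy group is locally constant along the boundary away from vertices: near a point with isotropy $T^1$, nearby boundary points also have isotropy exactly that $T^1$. Since the circle $\partial\overline{M}$ is connected, there would then be only one circle isotropy subgroup. This contradicts Corollary \ref{sides}, which gives at least $n-2\geq 2$ distinct circle isotropy subgroups.

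So there are $v\geq 1$ vertices. They cut $\partial\overline{M}$ into exactly $v$ open arcs, the edges. By the same local-constancy argument, each edge has a single circle isotropy subgroup. Every circle isotropy subgroup occurs on some edge, and by Corollary \ref{sides} there are at least $n-2$ distinct ones. Hence there are at least $n-2$ edges. Since the number of edges equals the number of vertices on a circle, we get $v\geq n-2$.

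The main obstacle is making local constancy of the isotropy along an edge rigorous. I would use the slice representation at a point $x$ with $T^k_x\cong T^1$. The normal sphere has dimension $n-(k-1)-1=2$, so the slice is $\R^2$ with a faithful rotation action. The nearby orbits with the same isotropy $T^1$ are exactly those of slice points fixed by $T^1$, and these project onto a boundary arc through $\bar x$. Consequently the set of boundary points with a given circle isotropy is open in the complement of the vertices.
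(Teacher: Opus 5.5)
Your argument is correct and is essentially the paper's: the paper also counts boundary arcs with distinct circle isotropy groups (at least $n-2$ of them by Corollary \ref{sides}) and uses that such arcs meet only at vertices. You additionally spell out why the isotropy is constant along each edge, and you treat the no-vertex case separately, which is where $n-2\geq 2$ is needed.

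One slip to fix. Since the normal sphere is $S^2$, the slice at a point with circle isotropy is $\R^3\cong\R^2\oplus\R$, with $T^1$ rotating $\R^2$ and fixing $\R$, not $\R^2$ itself. It is this fixed line that projects onto the boundary arc through $\bar x$.
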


Suppose now that $T^1\subset T^{n-2}$ corresponds to the isotropy group of an edge in $\overline{M}$. Then the $T^1$-action on $M$ has $\dim(M^{T^1})=n-2$ and $\dim(M/T^1)=n-1$, so it is fixed-point homogeneous. Thus, all discussion in this article fits into the broader context of manifolds which admit a fixed-point homogeneous circle action.

Angle information at vertices determines when the preimage of an edge is a submanifold, as shown in the proof of Theorem 5.1 in McGowan and Searle \cite{McGowan} and the discussion which immediately follows. We sketch the proof for completeness.

\begin{proposition} \cite{McGowan}
    Let $G$ be a closed Lie group acting smoothly and effectively  by cohomogeneity two on a closed, simply connected manifold $M$  and suppose that  $\overline{M}$ is a 2-disk.
    If $\overline{M}$ contains at least two vertices, then the inverse image of any edge in $ \overline{M}$ is a submanifold admitting a cohomogeneity one $G$-action if and only if the angles  at its vertices are $\pi/2$.
\end{proposition}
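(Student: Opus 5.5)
The plan is to reduce the question to local models near the vertices, since away from vertices the preimage of an edge is automatically well behaved. Fix an edge $E\subset\partial\overline{M}$ with endpoints the vertices $v_1,v_2$; these are distinct because $\overline{M}$ has at least two vertices. Let $N=\pi^{-1}(E)$. Over the interior of $E$ all orbits share the same isotropy type $G_E$. By the Slice Theorem, a neighborhood of such an orbit is $G\times_{G_E}D^\perp$, where $G_E$ acts on $D^\perp$ fixing a subspace whose image is the edge direction. Hence $\pi^{-1}(\mathrm{int}\,E)$ is a smooth submanifold, since it is locally $G\times_{G_E}(D^\perp)^{G_E}$. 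It is a union of orbits over an open interval, so the $G$-action on it is of cohomogeneity one. The whole question is therefore smoothness of $N$ at the orbits $G(x_i)=\pi^{-1}(v_i)$.

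Near $x_i$, the Slice Theorem gives a neighborhood $G\times_{K_i}D^\perp_{x_i}$ with $K_i=G_{x_i}$. The orbit space near $v_i$ is then $D^\perp_{x_i}/K_i$. Because the action has cohomogeneity two, the $K_i$-action on the normal sphere $S^\perp_{x_i}$ has cohomogeneity one, and $S^\perp_{x_i}/K_i$ is an interval of length equal to the angle $\alpha_i$ at $v_i$. This interval is the space of directions of $\overline{M}$ at $v_i$, and $D^\perp_{x_i}/K_i$ is the cone over it. The two endpoints of the interval are the directions of the two edges at $v_i$; they correspond to singular $K_i$-orbits $K_i/H_\pm$ in $S^\perp_{x_i}$, with $H_\pm$ the edge isotropy groups. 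The preimage of $E$ near $G(x_i)$ is $G\times_{K_i}C(K_i/H_E)$, the cone on the singular orbit. This is a submanifold exactly when $K_i/H_E$ is a great subsphere $S^\perp_{x_i}\cap V$ for some $K_i$-invariant linear subspace $V$, and in that case the neighborhood is the cohomogeneity one disk bundle $G\times_{K_i}D(V)$.

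The key step is to show that this happens if and only if $\alpha_i=\pi/2$. I would use the structure of cohomogeneity one actions on spheres. If the orbit $K_i/H_E$ is a linear subsphere in $V$, then the other singular orbit is the unit sphere in $V^\perp$, which is also invariant because the representation is orthogonal. Every point of $S^\perp_{x_i}$ is then $\cos t\,u+\sin t\,w$ with $u\in V$ and $w\in V^\perp$, so the distance between the two singular orbits is exactly $\pi/2$, giving $\alpha_i=\pi/2$. Conversely, if $\alpha_i=\pi/2$, let $V=\mathrm{span}(K_i/H_E)$. The orbit $K_i/H_E$ lies at distance $\pi/2$ from the other singular orbit, and the two orbits are the focal sets of each other. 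I would argue that every point at distance $\pi/2$ from one singular orbit lies in the span-orthogonal of that orbit, so $K_i/H_E$ is contained in the great sphere orthogonal to the other singular orbit. A dimension count of the tubes then forces $K_i/H_E=S(V)$. The fallback is the classification of cohomogeneity one actions on spheres: the angle is $\pi/g$, where $g$ is the number of distinct principal curvatures of the isoparametric family, and $g=2$ corresponds exactly to the product case $S(V)*S(V^\perp)$. This step is the main obstacle. The cleanest route is to invoke the isoparametric statement that $g=2$ if and only if the focal submanifolds are totally geodesic great spheres.

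Finally, I would assemble the pieces. If both angles are $\pi/2$, then $N$ is a smooth closed submanifold. It is the union of the two disk bundles $G\times_{K_i}D(V_i)$ glued along principal orbits $G/G_E$, so it is a cohomogeneity one $G$-manifold with group diagram $(G,G_E,K_1,K_2)$. If some angle differs from $\pi/2$, the local model at that vertex is the cone on a non-totally-geodesic orbit, which is not a submanifold, or at least not a smooth one. So $N$ fails to be a submanifold carrying the cohomogeneity one structure.
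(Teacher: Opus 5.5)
Your proof is correct in outline, but it takes a different route from the paper's sketch.

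\textbf{The paper's route.} It argues globally. It writes $N=D(G/L_1)\cup_E D(G/L_2)$ with diagram $(G,K,L_1,L_2)$ and notes that the fibers $L_i/K$ are exactly the singular orbits of the cohomogeneity one $L_i$-actions on the normal spheres at the vertex orbits. It then quotes the classification of cohomogeneity one actions on spheres: such a singular orbit is a sphere exactly when the orbit space has length $\pi/2$. So its criterion is topological (sphere fibers, as Mostert's structure theory requires), and it uses the classification in both directions.

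\textbf{Your route.} You work locally with the cone $G\times_{K_i}C(K_i/H_E)$ and the smooth criterion that a cone is smooth at its apex iff it is linear. This makes the implication ``smooth submanifold $\Rightarrow$ angle $\pi/2$'' classification-free, via your $S(V)$, $S(V^\perp)$ computation. It reduces the converse to M\"unzner--Cartan or to an elementary argument.

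\textbf{Completing the elementary argument.} This is worth doing, since the M\"unzner fallback as you state it ($\alpha=\pi/g$) can fail for disconnected isotropy groups. For example, $T^2\rtimes\mathbb{Z}_2$ on $S^3$ gives angle $\pi/4$. Write $\Sigma_+=K_i u_E$ and $\Sigma_-=K_i q$ for the two singular orbits, and assume $\alpha_i=\pi/2$.
\begin{enumerate}[(1)]
\item $K_i$ fixes no nonzero vector, since two antipodal fixed points would give orbit-space diameter $\pi$. Hence the Haar barycenter of $\Sigma_-$ is $0$.
\item For $x\in\Sigma_+$ you have $\langle x,kq\rangle\le 0$ for all $k$, with zero Haar average. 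This forces $\Sigma_+\perp\Sigma_-$, which is the justification missing from your ``span-orthogonal'' claim.
\item Replace the vague ``dimension count'' with this: every point lies on a minimal geodesic of length $\pi/2$ from $\Sigma_-$ to $\Sigma_+$. That gives $\mathbb{R}^k=\mathrm{span}\,\Sigma_+\oplus\mathrm{span}\,\Sigma_-$, and then $S(\mathrm{span}\,\Sigma_+)\subseteq\Sigma_+$.
\end{enumerate}

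\textbf{What each route buys.} Yours gives linearity of the local model and an elementary forward direction. It does not give the topological statement, however. For $\alpha_i\neq\pi/2$ you only show that the preimage is not a \emph{smooth} submanifold; you acknowledge this with your hedge. To exclude a topological submanifold you need that the singular orbit is not a sphere. That is exactly what the paper's appeal to the classification supplies.
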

\begin{proof}

    Let $\overline{N}$ be an edge of $\overline{M}$ with vertices $\overline{p}_1$ and $\overline{p}_2$, and let $N$ denote the corresponding submanifold with cohomogeneity one group diagram $(G,K,L_1,L_2)$. $N$ is homeomorphic to the union of disk bundles $D(G/L_1)\cup_E D(G/L_2)$, glued along their common boundary $E$. The principal orbits $G/K$ fiber over the singular orbits $G/L_1$ and $G/L_2$ with sphere fibers $L_1/K$ and $L_2/K$. Moreover, the induced action of each $L_i$ at $p_i$ on its unit normal sphere is by cohomogeneity one, with group diagram $(L_i, H, K_i,K_i')$, where $H$ is the principal isotropy subgroup of the $G$-action.

    Via the classification of cohomogeneity one actions on spheres, one sees that the only actions with singular orbits that are spheres are those for which the diameter of the orbit space is $\pi/2$. Thus $N$ is a cohomogeneity one manifold if and only if the angles at $\overline{p}_1$ and $\overline{p}_2$ are equal to $\pi/2$.
\end{proof}

Finally, we state Theorem C of Dong, Escher, and Searle \cite{DES}, which provides an equivariant diffeomorphism classification provided $M$ is rationally elliptic and the action on $M$ is isotropy-maximal. A forthcoming corrigendum corrects its statement in \cite{DES}, where the additional hypothesis that all faces are contractible was mistakenly omitted.

\begin{theorem}\cite{DES}\label{classify}
	Let $M^n$ be a closed, rationally elliptic $n$-manifold admitting a smooth, effective, locally standard, and isotropy-maximal $T^k$-action. Suppose all faces of $\overline{M}$ are contractible and all 4-dimensional faces of $\overline{M}$ are diffeomorphic to disks, after smoothing the corners. Then $M^n$ is equivariantly diffeomorphic to a quotient of a free linear torus action of $$\mathcal{Z}^m=\prod_{i<r}S^{2n_i}\times\prod_{i\geq r}S^{2n_i-1},\hspace{5pt} n_i\geq2, \hspace{5pt}where\hspace{5pt} n\leq m\leq3n-3k.$$
\end{theorem}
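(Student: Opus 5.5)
The plan is to reduce to the combinatorics of the orbit space and then reconstruct $M$ from it. By the Slice Theorem together with local standardness, each point of $M$ has a neighbourhood of the form $T^r\times W\times\R^m$. Here $W$ is a faithful $T^k_x$-representation of real dimension $2\dim(T^k_x)$. So $W/T^k_x$ is a corner $\R_{\ge0}^{\dim T^k_x}$, and $\overline{M}=M/T^k$ is a topological manifold with corners of dimension $n-k$.

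The orbit space carries the following structure:
\begin{itemize}
\item its codimension-$j$ faces are the closures of the images of orbits with $j$-dimensional isotropy;
\item each facet is labelled by a circle subgroup of $T^k$;
\item isotropy-maximality gives a vertex, i.e.\ a point where $n-k$ facets meet;
\item by Lemma \ref{minisolated}, each such vertex is isolated.
\end{itemize}

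The next step uses the hypothesis that all faces are contractible, with the $4$-dimensional faces genuinely disks after smoothing corners. With this I would show that $\overline{M}$ is a homology polytope whose face poset is that of a simple polytope $P$ with $m$ facets. I would then invoke the standard reconstruction (Davis--Januszkiewicz, or Wiemeler \cite{Wiemeler} for locally standard actions). Contractibility of faces kills the Euler-class and cohomological obstructions, so $M$ is equivariantly diffeomorphic to the canonical model built from $\overline{M}$ and its characteristic circle labels. That model is $(\overline{M}\times T^k)/\!\sim$, or $(\overline{M}\times T^{k'})/\!\sim$ times a free torus factor, the latter accounting for the $2k-n$ free orbit directions. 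Lifting the facet labels to the standard basis of $\mathbb{Z}^m$ exhibits $M$ as the quotient of the moment-angle-type manifold $\mathcal{Z}_P\times T^{2k-n}$ by a freely acting subtorus. The diffeomorphism-disk hypothesis on $4$-faces is what lets me smooth this identification, since in that dimension contractible need not mean disk.

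The key step, and the one I expect to be the main obstacle, is to use rational ellipticity of $M$ to force $P$ to be combinatorially a product of simplices $\prod \Delta^{n_i-1}$. For the interval factors, I would also allow the even-sphere factors, corresponding to $S^{2n_i}$ with its $T^{n_i}$-action whose quotient is a simplex-like face.

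To do this I would first pass to $\mathcal{Z}_P$, which is rationally elliptic because it fibres over $M$ with torus fibre. I would then use the known computation of the rational homotopy of moment-angle manifolds via the Stanley--Reisner ring. The homotopy is finite-dimensional only when the ring is a complete intersection, which happens exactly when $P$ is a product of simplices. Alternatively, I could use the Halperin bound $\dim\pi_{\mathrm{odd}}\otimes\mathbb{Q}\le$ formal dimension, together with the total Betti number bound $\le 2^{\dim}$, to exclude extra facets. In the first approach, each simplex factor gives a sphere $S^{2n_i-1}$, and the even-sphere factors arise from faces with a reflection-type fold.

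Once $P$ is identified, $\mathcal{Z}_P$ is a product of spheres. The quotient torus acts freely and linearly, which gives the stated $\mathcal{Z}^m$. Finally, the count of facets gives $m=\#\text{facets}+(2k-n)$, and since the number of facets lies between $n-k$ and $2(n-k)$ this yields the bound $n\le m\le 3n-3k$.
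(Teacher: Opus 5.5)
The paper gives no proof of this statement; it is quoted from \cite{DES}. So your proposal has to stand on its own, and it has two concrete gaps.

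\textbf{The even-sphere factors are not handled.} Your second step is false under the stated hypotheses. Contractibility of all faces does not make $\overline{M}$ a homology polytope with the face poset of a simple polytope. The linear $T^2$-action on $S^4$ satisfies every hypothesis, but its orbit space is the lune of Figure~\ref{pictures}(A), whose two edges meet in two vertices. In general, the factors $S^{2n_i}$ of $\mathcal{Z}^m$ arise exactly from such non-polytopal pieces $S^{2n_i}/T^{n_i}$. Each is an $n_i$-disk with $n_i$ facets whose total intersection is two points.

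Your key step says that $\mathcal{Z}_P$ is elliptic iff its Stanley--Reisner ring is a complete intersection iff $P$ is a product of simplices. That is a statement about simple polytopes, so it can only ever output products of odd spheres. The remark about ``faces with a reflection-type fold'' does not fill this gap. You would need to run the ellipticity argument for the face ring of the simplicial poset dual to $\overline{M}$; contractible faces make $\overline{M}$ face-acyclic, not a homology polytope. You would then have to show that ellipticity forces the combinatorial type $\prod_i\Delta^{n_i-1}\times\prod_j (S^{2n_j}/T^{n_j})$.

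Even then you still need a diffeomorphism of manifolds with corners between $\overline{M}$ and that standard model. The reason is that $M$ is a quotient of $(\overline{M}\times T^f)/\!\sim$, not of the abstract $\mathcal{Z}_P$. This is where contractibility of the lower faces (via the Poincar\'e conjecture and $h$-cobordism in dimensions $\neq 4$) and the hypothesis on $4$-faces are actually used. Contractibility of $\overline{M}$ alone is what trivializes the principal bundle; it does not identify the corner structure.

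\textbf{Your model and facet count are wrong.} Let $H\subset T^k$ be the subtorus generated by the facet circles. Then $T^k/H$ acts freely on the simply connected space $M/H$ with contractible quotient $\overline{M}$. So $M/H\cong \overline{M}\times T^k/H$, which forces $H=T^k$; simple connectivity is implicit in rational ellipticity. Hence there is no extra $T^{2k-n}$ factor, and such a factor would in any case not be of the form $\mathcal{Z}^m$. Instead $M=\mathcal{Z}/\ker(T^f\to T^k)$ with $m=\dim\mathcal{Z}=(n-k)+f$.

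Your formula $m=f+(2k-n)$ with $n-k\le f\le 2(n-k)$ gives $k\le m\le n$, not $n\le m\le 3n-3k$. For $\mathbb{C}P^2=S^5/T^1$ it gives $m=3$ instead of $5$. The correct bounds come from two facts:
\begin{itemize}
\item $f\ge k$, because the labels span $T^k$, which gives $m\ge n$;
\item $f\le 2(n-k)$, which is only available after the combinatorial classification: $\Delta^{n_i-1}$ has $n_i\le 2(n_i-1)$ facets since $n_i\ge 2$, and $S^{2n_j}/T^{n_j}$ has $n_j$ facets.
\end{itemize}
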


\begin{remark}\label{4Dfaces}
    For actions of cohomogeneity three or less, the hypothesis of 4-dimensional faces being diffeomorphic to disks is vacuous.
\end{remark}

\subsection{Alexandrov spaces}

Recall that an \textit{Alexandrov space} $X$ is a finite dimensional, locally compact, locally complete length space with a lower curvature bound in the triangle comparison sense, denoted $\curv(X)\geq k$. Alexandrov spaces are a natural generalization of Riemannian manifolds with a lower sectional curvature bound. In particular, the corollary in Section 4.6 of Burago, Gromov, and Perelman \cite{BGP} shows that orbit spaces of isometric $G$-actions on such manifolds are examples of Alexandrov spaces. For more information on Alexandrov spaces in general, see  Burago, Burago, and Ivanov \cite{BBI}.

The analog to the unit tangent sphere at a point $p\in X$ of an Alexandrov space is called the \textit{space of directions} and denoted $\Sigma_p$. By definition, $\Sigma_p$ is the completion of the space of geodesic directions at $p$ equipped with the angular metric.

When $\dim(X)\geq2$, the space of directions $\Sigma_p$ at each point $p\in X$ is a positively curved Alexandrov space with $\dim(\Sigma_p)=\dim(X)-1$. Positively curved 1-dimensional Alexandrov spaces are homeomorphic to a closed interval $I$ or $S^1$. Thus, every Alexandrov space of dimension two is a topological manifold with or without boundary, as stated in Corollary 10.10.3 of \cite{BBI}. As all $0$- and $1$-dimensional Alexandrov spaces are homeomorphic to a point, an interval, or $S^1$, we obtain the following lemma.

\begin{lemma}\cite{BBI}\label{topmfld}
    Let $X$ be an Alexandrov space of dimension at most $2$. Then $X$ is a topological manifold, possibly with boundary.
\end{lemma}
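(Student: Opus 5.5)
The statement just given is Lemma \ref{topmfld}: an Alexandrov space $X$ with $\dim X\leq 2$ is a topological manifold, possibly with boundary. I will argue by cases on $\dim X\in\{0,1,2\}$. For dimension $1$ I will also use the classification of $1$-dimensional Alexandrov spaces for the spaces of directions in dimension $2$. The key fact throughout is that for $\dim X\geq 2$ and $p\in X$, the space of directions $\Sigma_p$ is an Alexandrov space of curvature $\geq 1$ and dimension $\dim X-1$.

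\emph{Dimensions $0$ and $1$.} A $0$-dimensional Alexandrov space is a single point, hence a $0$-manifold. For a $1$-dimensional Alexandrov space, I will use the fact recorded above that it is homeomorphic to a point, a closed interval, a ray or line in the noncompact case, or $S^1$. I would justify this quickly. $X$ is a connected length space of Hausdorff dimension $1$. The lower curvature bound forbids branching, since geodesics in an Alexandrov space cannot branch, so a branch point would violate triangle comparison. With no branching, $X$ is a $1$-manifold with boundary. All of these are $1$-manifolds, possibly with boundary.

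\emph{Dimension $2$.} Fix $p\in X$. Then $\Sigma_p$ is a $1$-dimensional Alexandrov space with curvature $\geq 1$. It is compact, and by the one-dimensional case it is either a closed interval or a circle. The curvature bound also forces its length to be at most $\pi$ in the interval case and at most $2\pi$ in the circle case, though only the topology matters here.

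Next I will invoke Perelman's conical neighborhood theorem: a small neighborhood of $p$ is homeomorphic to the tangent cone $C(\Sigma_p)$. If $\Sigma_p\cong S^1$, then $C(\Sigma_p)\cong\R^2$, so $p$ is an interior manifold point. If $\Sigma_p\cong[0,a]$, then $C(\Sigma_p)$ is a closed sector, which is homeomorphic to the half-plane $\R\times[0,\infty)$ with $p$ going to a boundary point. Hence every point has a neighborhood homeomorphic to $\R^2$ or to the half-plane, and $X$ is a topological $2$-manifold with boundary. This is exactly Corollary 10.10.3 of \cite{BBI}, which I would cite for this step.

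\emph{Main obstacle.} The only nontrivial input is the local conical structure: the statement that a neighborhood of $p$ is homeomorphic to $C(\Sigma_p)$. This is Perelman's stability and conical-neighborhood theorem, which is deep. In dimension $2$ it can be proved more directly, for example via the approach in \cite{BBI} using distance coordinates or the classification of surfaces with bounded curvature. I would simply cite it rather than reprove it. Everything else is the elementary one-dimensional classification.
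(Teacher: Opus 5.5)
Your proposal is correct and follows essentially the same route as the paper: split by dimension, classify $0$- and $1$-dimensional Alexandrov spaces, and in dimension $2$ observe that $\Sigma_p$ is a closed interval or a circle, then conclude with Corollary 10.10.3 of \cite{BBI}. Your explicit appeal to the conical neighborhood theorem only makes visible the input that corollary rests on, and your non-branching sketch in dimension $1$ is no less complete than the paper, which simply asserts that classification.
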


\noindent 

Consider now the orbit space $\overline{M}$ of an isometric $G$-action, where $\sec(M)\geq k$. The space of directions $\Sigma_{\overline{x}}$ is isometric to $S^\perp_x/G_x$, where $S^\perp_x$ is the normal sphere to the orbit $G(x)$ at $x$.

\subsection{Geometric results} The assumption of almost non-negative curvature provides additional structure that we require to prove \hyperref[thmA]{Theorem A}. We first recall its definition.
\begin{definition}[Almost non-negative curvature]\label{ANNCdef}
    A smooth manifold $M$ is said to be \textnormal{almost non-negatively curved} if it admits a sequence $\{g_k\}_{k=1}^\infty$ of Riemannian metrics such that $\sec(M,g_k)\geq-1/k^2$ and $\diam(M,g_k)\leq1$ for all $k$. The sequence $\{g_k\}$ is also said to be almost non-negatively curved.
\end{definition}

\noindent A primary motivation to study the class of almost non-negatively curved manifolds is that they are exactly the manifolds that collapse to a point with a lower curvature bound, as we see in the following proposition. That is, they admit a sequence of Riemannian metrics $\{h_k\}_{k=1}^\infty$ such that $\sec(M,h_k)\geq K$ for some fixed $K$ and $\lim_{k\to\infty}\diam(M,h_k)=0$.

\begin{proposition}\label{ANNC}
    A smooth manifold $M$ is almost non-negatively curved if and only if it collapses to a point while maintaining a lower sectional curvature bound. 
\end{proposition}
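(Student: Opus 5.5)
The proof is a rescaling argument in each direction, using that sectional curvature scales as $\sec(M,\lambda^2 g)=\lambda^{-2}\sec(M,g)$ and diameter as $\diam(M,\lambda^2g)=\lambda\diam(M,g)$.

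\emph{Forward direction.} Let $\{g_k\}$ be an almost non-negatively curved sequence as in Definition \ref{ANNCdef}, and set $h_k:=k^{-2}g_k$. Then
$$\sec(M,h_k)=k^2\sec(M,g_k)\geq k^2\cdot(-1/k^2)=-1,\qquad \diam(M,h_k)=k^{-1}\diam(M,g_k)\leq 1/k\to0.$$
So $M$ collapses to a point with the uniform lower bound $K=-1$.

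\emph{Reverse direction.} Suppose $\{h_k\}$ satisfies $\sec(M,h_k)\geq K$ and $d_k:=\diam(M,h_k)\to0$. We may assume $K<0$, since replacing $K$ by $\min\{K,-1\}$ keeps the bound valid. Note that $d_k>0$ for every $k$ whenever $\dim M\geq1$; the degenerate case $M$ a point is trivial. Set $g_k:=d_k^{-2}h_k$. Then $\diam(M,g_k)=1$ and
$$\sec(M,g_k)=d_k^2\sec(M,h_k)\geq Kd_k^2=:-\epsilon_k,\qquad \epsilon_k\to0.$$

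The only remaining point is matching the exact normalization $\sec\geq-1/k^2$ required in Definition \ref{ANNCdef}. To handle it, pass to a subsequence: since $\epsilon_k\to0$, we can choose $k_1<k_2<\cdots$ with $\epsilon_{k_j}\leq 1/j^2$. Reindexing by $j$ then gives a sequence satisfying $\sec\geq-1/j^2$ and $\diam\leq1$, as required.

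There is no real obstacle here: apart from the subsequence bookkeeping, the argument is the scaling identities for curvature and diameter.
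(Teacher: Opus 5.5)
Your proof is correct and is the same two-way rescaling argument the paper uses, including the passage to a subsequence in the reverse direction. Your bookkeeping is slightly cleaner: you use the correct curvature scaling factors $k^2$ and $d_k^2$, where the paper writes $k$ and $d_k$ (this does not affect its conclusions), and you make the subsequence extraction explicit.
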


\begin{proof}
    Suppose first that $\{g_k\}_{k=1}^\infty$ is an almost non-negatively curved sequence of metrics on $M$. Then the sequence of metrics $\overline{g}_k$, rescaled by a factor of $1/k$, satisfies the properties $\diam(M,\overline{g}_k)=\diam(M,g_k)/k\leq1/k$ and $\sec(M,\overline{g}_k)=k\cdot\sec(M,g_k)\geq-1$. That is, $M$ collapses to a point while maintaining a lower curvature bound. Now suppose that $\{h_k\}_{k=1}^\infty$ is a sequence of Riemannian metrics on $M$ such that $\sec(M,h_k)\geq K$ and $\diam(M,h_k)\to 0$ as $k$ tends to infinity.
    Defining $d_k:=\diam(M,h_k)$, the sequence $\overline{h}_k$ rescaled by $1/d_k$ has $\diam(M,\overline{h}_k)=d_k/d_k=1$ and $\sec(M,\overline{h}_k)\geq d_k\sec(M,h_k)\geq Kd_k$. Since $d_k$ tends to $0$ as $k$ tends to infinity, there is an almost non-negatively curved subsequence of $\{\overline{h}_k\}$.
\end{proof}

Since an almost non-negatively curved manifold has a sequence of Riemannian metrics associated to it, one must adapt the notion of an isometric group action in this setting.
\begin{definition}
    An \textnormal{almost non-negatively curved $G$-manifold} is a manifold equipped with a smooth $G$-action and an almost non-negatively curved sequence $\{g_k\}_{k=1}^\infty$ of Riemannian metrics such that the $G$-action is isometric with respect to each $g_k$.
\end{definition}
\noindent Theorem 1.1 in Grove, Wilking, and Yeager \cite{GWY} provides topological information about almost non-negatively curved $G$-manifolds of low cohomogeneity.

\begin{theorem}\cite{GWY}\label{anncre}
    Let $G$ be a compact, connected Lie group and let $M$ be a closed, simply connected, almost non-negatively curved $G$-manifold. If the $G$-action on $M$ is of cohomogeneity at most two, then $M$ is rationally elliptic.
\end{theorem}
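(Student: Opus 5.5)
The argument splits by cohomogeneity, treating $0$, $1$ and $2$ in turn; only the last case should use the curvature hypothesis in an essential way.

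\emph{Cohomogeneity zero and one.} If the action is transitive, then $M=G/H$ is homogeneous. The fibration $H\to G\to G/H$, together with the rational ellipticity of compact Lie groups, gives $\dim\pi_*(G/H)\otimes\mathbb{Q}<\infty$. If the action has cohomogeneity one, then $\overline{M}$ is an interval, since $\pi_1(M)=0$ excludes $S^1$. By Mostert \cite{Mostert}, $M$ is a double disk bundle $D(G/K_1)\cup_{G/H}D(G/K_2)$, i.e.\ a double mapping cylinder of homogeneous spaces. I would invoke the Grove--Halperin argument here: one computes the rational homotopy (equivalently, the growth of loop space homology) of such a double mapping cylinder from the homogeneous pieces. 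This shows that every closed, simply connected cohomogeneity one manifold is rationally elliptic, with no curvature needed.

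\emph{Cohomogeneity two.} I expect this case to be the main obstacle, and it is where almost non-negative curvature enters. By \cite{BGP} and Lemma \ref{topmfld}, each orbit space $(\overline{M},\overline{g}_k)$ is a $2$-dimensional Alexandrov space with $\curv\geq -1/k^2$ and $\diam\leq 1$. It is a topological surface, and simple connectivity of $M$ forces it to be a $2$-disk or a $2$-sphere. The boundary consists of edges and vertices. The angle at a vertex $\overline{x}$ is the diameter of $\Sigma_{\overline{x}}=S^\perp_x/G_x$, which is determined by the isotropy representation. Hence it takes values in a finite set independent of $k$, and each value is at most $\pi/2$ at a genuine vertex, by the classification of cohomogeneity one sphere actions as in the proof of the McGowan--Searle proposition.

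Next I apply Gauss--Bonnet (Alexandrov's version for $2$-dimensional spaces) to $(\overline{M},\overline{g}_k)$:
$$\int_{\overline{M}}K\,dA+\sum(\pi-\alpha_i)+\int_{\partial}\kappa_g=2\pi\chi(\overline{M}).$$
The boundary is geodesic, so the boundary term contributes nothing. Also $K\geq -1/k^2$, and the area is uniformly bounded by $\diam\leq1$ and Bishop--Gromov. Together these give $\sum(\pi-\alpha_i)\leq 2\pi+O(1/k^2)$. Since every vertex contributes at least $\pi/2$ and the angles come from a fixed finite set, letting $k\to\infty$ gives the same combinatorial constraint as in non-negative curvature. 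There are at most four vertices, and the cases with $2\leq\#\text{vertices}\leq4$ have angle data essentially rigid at $\pi/2$.

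The final step is to decompose $M$ topologically using this restricted orbit-space combinatorics. Cut $\overline{M}$ along suitable arcs joining edges, so that $M$ is a union of pieces whose preimages are cohomogeneity one manifolds or disk bundles over them; this uses the McGowan--Searle proposition to recognize edge preimages when the angles are $\pi/2$. I would then either reduce to a known list of rationally elliptic models (products and quotients of spheres, as in the non-negatively curved classification) or rerun the Grove--Halperin double-mapping-cylinder computation one level up. The spherical orbit space case, with no boundary, I would handle similarly: it has only finitely many cone points whose angles are bounded below, and it again yields a decomposition into two cohomogeneity one pieces. I expect the hardest point to be making this gluing argument produce rational ellipticity uniformly across all the surviving vertex configurations.
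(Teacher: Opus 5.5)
\paragraph{There is a genuine gap.} The paper does not prove this statement; it imports it from \cite{GWY}, so your proposal has to stand on its own, and as written it does not. Your cohomogeneity $0$ and $1$ cases are fine: homogeneous spaces, and the Grove--Halperin theorem for cohomogeneity one. In cohomogeneity two, the Gauss--Bonnet estimate is a reasonable way to restrict $\overline{M}$; it amounts to the ``only if'' half of Theorem \ref{maxsides}. In the disk case you should also include interior cone points from exceptional orbits, but these only strengthen the inequality. This is only the curvature input, though. The actual conclusion, that a cohomogeneity-two manifold whose orbit space satisfies these restrictions is rationally elliptic, is never derived. Your final paragraph offers a menu (``either reduce to a known list \ldots\ or rerun \ldots'') and explicitly defers the decisive step. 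That step is the heart of the theorem, and nothing in the proposal supplies it.

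\paragraph{The sketched route also rests on a false premise.} Gauss--Bonnet forces all angles to be $\pi/2$ only when there are four vertices. In the limit it gives merely $\alpha_1+\alpha_2+\alpha_3\geq\pi$ for three vertices, and no constraint for two. Right angles are automatic for the torus actions used later in the paper, but not for general $G$. For instance, the conjugation actions of $SU(3)$, $Sp(2)$ and $G_2$ on themselves with bi-invariant metrics are non-negatively curved and of cohomogeneity two. Their orbit spaces are flat Weyl alcoves with angles $(\pi/3,\pi/3,\pi/3)$, $(\pi/2,\pi/4,\pi/4)$ and $(\pi/2,\pi/3,\pi/6)$. By the very McGowan--Searle criterion you invoke, edge preimages through such vertices are not submanifolds. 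Near the identity of $SU(3)$, for example, the preimage of an edge is a cone over $\C P^2$. So cutting along edge preimages is unavailable. The same happens in the $S^2$ case at cone points of order $m\geq 3$: there the preimage of an arc ending at the cone point is locally a bundle whose fibre is a star of $m$ rays.

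\paragraph{Even a double disk bundle leaves work.} Suppose a decomposition $D(B_1)\cup_E D(B_2)$ does exist, e.g.\ for a rectangle with $B_i$ the preimages of opposite edges. Then the pieces are cohomogeneity-one manifolds rather than homogeneous spaces. Moreover, $E$ can have infinite fundamental group: for $S^2\times S^2$ with its standard $T^2$-action one gets $E=S^2\times S^1$. So ``rerunning Grove--Halperin one level up'' means checking that their double-mapping-cylinder argument still works with these inputs, and uniformly over all admissible orbit-space configurations. You do not carry this out.
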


\noindent By Corollary 6.3 in Chapter 2 of Bredon \cite{Bredon}, if $G$ is connected and $M$ is simply connected, then $\overline{M}$ is simply connected. Since $\overline{M}$ is an Alexandrov space of dimension at most two, Lemma \ref{topmfld} implies that $\overline{M}$ is a topological manifold with or without boundary. Hence, $\overline{M}$ is homeomorphic to a point, an interval, $S^2$, or $D^2$. In this setting, Theorem 1.2 in \cite{GWY} applies a lifting result of Searle and Wilhelm \cite{SearleWilhelm} to relate the structure of almost non-negatively curved $G$-manifolds to the topology and geometry of $\overline{M}$.

\begin{theorem}\cite{GWY}\label{maxsides}
	Let $M$ be a closed, simply connected manifold and let $G$ be a compact, connected Lie group. Suppose $G$ acts smoothly on $M$ by cohomogeneity at most two. Then $M$ is an almost non-negatively curved $G$-manifold if and only if $\overline{M}$ is not a disk with more than 4 edges.
\end{theorem}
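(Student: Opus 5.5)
The plan is to treat the two implications separately, working throughout with the classification recalled just above. Since $G$ is connected and $M$ is simply connected, $\overline{M}$ is a simply connected Alexandrov space of dimension at most two. By Lemma \ref{topmfld} it is therefore a point, an interval, $S^2$, or $D^2$. In the disk case the boundary consists of edges meeting at vertices $\overline{p}$. The key local input concerns the space of directions at such a vertex. It is $\Sigma_{\overline{p}}=S^\perp_p/G_p$, an interval coming from a cohomogeneity one action on a round sphere, so by the classification of such actions its length is $\pi/l$ with $l\in\{1,2,3,4,6\}$. At a genuine vertex (a corner between two distinct edges) we have $l\geq 2$, so the interior angle is at most $\pi/2$ and the exterior angle is at least $\pi/2$.

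\emph{Only if.} Suppose $\{g_k\}$ is an almost non-negatively curved sequence of $G$-invariant metrics, and assume towards a contradiction that $\overline{M}$ is a disk with $N\geq 5$ edges. Each $g_k$ makes $\overline{M}_k=(M,g_k)/G$ an Alexandrov surface with $\curv\geq -1/k^2$ and $\diam\leq 1$. The edges are images of fixed point sets of isotropy groups, which are totally geodesic by Theorem \ref{fix}, so the edges are geodesics in $\overline{M}_k$. I would then apply the Gauss--Bonnet formula for Alexandrov surfaces (Alexandrov--Zalgaller curvature measure), which gives
\[
\omega_k(\mathrm{int}\,\overline{M}_k)=2\pi-\sum_{i=1}^{N}(\pi-\theta_i)\leq 2\pi-\tfrac{N\pi}{2}\leq-\tfrac{\pi}{2}.
\]
On the other hand, the negative part of the curvature measure is bounded below by $-\mathrm{Area}(\overline{M}_k)/k^2$. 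Bishop--Gromov comparison with $\diam\leq1$ and $\curv\geq -1$ bounds the area uniformly, so the total curvature is at least $-C/k^2\to 0$, a contradiction. The delicate part of this step is justifying the Gauss--Bonnet formula, with geodesic boundary and corner angles, in the Alexandrov setting.

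\emph{If.} Here I would invoke the lifting principle of Searle and Wilhelm. Suppose $\overline{M}$ carries a metric of non-negative curvature that is compatible with the isotropy data, meaning geodesic edges and the prescribed vertex angles $\pi/l_i$. Then one can build $G$-invariant metrics on $M$ that submerge onto it, and by shrinking the orbits (a Cheeger-type deformation) obtain $\sec\geq-\varepsilon$ with bounded diameter. Rescaling then gives an almost non-negatively curved sequence, as in Proposition \ref{ANNC}. It remains to produce the model metrics case by case:
\begin{itemize}
\item For a point or an interval (cohomogeneity $0$ or $1$), use a normal homogeneous metric, respectively the Schwachh\"ofer--Tuschmann metrics on cohomogeneity one manifolds.
\item For $S^2$, use a round metric, adapted near the finitely many exceptional points to cone angles $2\pi/m$.
\item For a disk with $N\leq 4$ vertices, use a convex geodesic polygon with interior angles $\pi/l_i$. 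The exterior angles sum to $\sum(\pi-\pi/l_i)\leq 2\pi$, with equality only for four right angles. Equality is realized by a flat rectangle, and otherwise by a non-negatively curved polygon obtained by doubling or smoothing a spherical lune or triangle, with disks having $0$ or $1$ vertex handled by hemispheres with a cone point.
\end{itemize}

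I expect the main obstacle to be the lifting step. The local slice structure near vertices and edges has to match the polygon metric exactly, with the correct angles and geodesic edges, in order for the $G$-invariant metric on $M$ to be smooth while curvature is only lost by an arbitrarily small amount. This is precisely where the Searle--Wilhelm theorem is needed, and I would cite it rather than reprove it.
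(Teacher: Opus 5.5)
The paper gives no proof of this statement. It cites \cite{GWY} and notes only that the converse uses the Searle--Wilhelm lifting, so your overall architecture is the expected one. Your ``only if'' half is sound: the vertex angles $\pi/l_i$ are fixed by the slice representations, hence independent of $k$. Gauss--Bonnet for $\overline{M}_k$ (or for its Perelman double), together with $\omega_k\geq-\mathrm{Area}(\overline{M}_k)/k^2$ and the uniform area bound, then gives the contradiction.

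The gap is in the disk case of the ``if'' half. From $l_i\geq 2$ you only get $\pi-\pi/l_i\geq\pi/2$, which is a \emph{lower} bound, so $\sum_i(\pi-\pi/l_i)\geq N\pi/2$. For $N\leq 4$ the sum is at most $2\pi$ if and only if $\sum_i(1-1/l_i)\leq 2$. This fails, for instance, for a quadrilateral with one angle $\pi/3$ (sum $13\pi/6$), or for a triangle with angles $\pi/2,\pi/4,\pi/6$ (sum $25\pi/12$). For such data no non-negatively curved model polygon exists. Worse, your own Gauss--Bonnet argument shows that $M$ then admits no $G$-invariant almost non-negatively curved sequence at all, so no lifting theorem can repair this step.

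You would therefore have to show that these angle configurations cannot occur on simply connected $M$, and that appears to be false. Let $G=SO(3)\times SO(2)$ act on $S^6\subset\R^5\oplus\R^2$, where $\R^5$ is the traceless symmetric $3\times 3$ matrices with $SO(3)$ acting by conjugation and $SO(2)$ rotates $\R^2$. The orbit space is the spherical join $[0,\pi/3]*\{\mathrm{pt}\}$, a triangle with angles $\pi/3,\pi/2,\pi/2$. Remove an invariant tubular neighborhood of the codimension-four orbit $\R P^2$ over one right-angled vertex, and double the complement. The result is simply connected, by van Kampen, and its orbit space is a quadrilateral with angles $\pi/3,\pi/2,\pi/2,\pi/3$, whose exterior angles sum to $7\pi/3>2\pi$.

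So for general $G$ the ``if'' direction needs the extra condition $\sum_i(1-1/l_i)\leq 2$, i.e.\ that the polygon with its angles is not hyperbolic. This reduces to $N\leq 4$ exactly when all vertex angles are $\pi/2$. That is the situation for the torus actions to which the paper applies the result, and there your argument goes through.

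A similar but fixable omission occurs in your $S^2$ case. With three or more cone points, a non-negatively curved metric with the prescribed cone angles $2\pi/m_i$ need not exist. You must first use $\pi_1(M)=0$: since $\pi_1(M)$ surjects onto the orbifold fundamental group of $\overline{M}$, that group is trivial, so there are at most two cone points, of coprime orders.
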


\section{Proof of Theorem A}
In this section, we prove Theorem A by showing that the conditions of Theorem \ref{classify} are satisfied. A key ingredient is the next proposition, which proves that the $T^{n-2}$-action is locally standard.

\begin{proposition}\label{actionlocstd}
    Let $M^n$ be a closed, simply connected $T^{n-2}$-manifold, $n\geq 4$. Then the $T^{n-2}$-action on $M$ is locally standard.
\end{proposition}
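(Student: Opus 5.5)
The plan is to check local standardness orbit by orbit, using the three types of isotropy given by Theorem \ref{ospace}: trivial, circle, or $T^2$. At each $x\in M$ the Slice Theorem gives a $T^{n-2}$-equivariant tubular neighborhood $T^{n-2}\times_{T_x}D^\perp_x$. We have to show this has the product form of Definition \ref{locstd}, where $T_x:=T^{n-2}_x$. The normal disk $D^\perp_x$ has dimension $n-\dim T^{n-2}(x)=n-(n-2)+\dim T_x=2+\dim T_x$. By the Slice Theorem, $T_x$ acts effectively and orthogonally on it.

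\emph{Trivial isotropy.} The neighborhood is $T^{n-2}\times D^2$ with $r=n-2$, $W=0$ and $\R^m=\R^2$. Nothing more is needed.

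\emph{Circle isotropy, $T_x=S^1$.} The slice $D^\perp_x$ is a $3$-disk carrying an effective orthogonal circle representation, so it splits as $\R\oplus\C$. On the $\C$ factor the circle acts by $z\mapsto\lambda^a z$. Effectiveness forces $|a|=1$, so up to reparametrizing the circle this is the standard representation. Thus $W=\C$ is a faithful representation of real dimension $2=2\dim T_x$, and the remaining $\R$ factor is trivial.

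\emph{$T^2$ isotropy.} The slice is a $4$-disk with an effective orthogonal $T^2$-action. Any orthogonal $T^2$-representation on $\R^4$ splits as a trivial part plus weight spaces $\C_{\alpha_1}\oplus\C_{\alpha_2}$. Effectiveness rules out a trivial summand: otherwise the weights would span only a rank $\le1$ lattice and a circle would act trivially. So $W=\C_{\alpha_1}\oplus\C_{\alpha_2}$. Faithfulness means the homomorphism $T^2\to T^2$, $t\mapsto(t^{\alpha_1},t^{\alpha_2})$, is injective, hence an isomorphism. After an automorphism of $T_x$ this is the standard representation, and $m=0$.

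\emph{Main obstacle: the product decomposition.} We must show $T^{n-2}\times_{T_x}D^\perp_x\cong T^r\times W\times\R^m$ with $T^{n-2}\cong T^r\times T_x$. This needs two things.
\begin{enumerate}[(i)]
\item The isotropy group $T_x$ must be connected, so that it is genuinely a subtorus; a priori it could be a subtorus times a finite group. This is supplied by Theorem \ref{ospace}, which states the isotropy groups are exactly trivial, $T^1$ or $T^2$. Equivalently, interior points have trivial isotropy, so there are no exceptional orbits.
\item Any subtorus of a torus is a direct factor. We choose a complementary subtorus $T^r$ with $T^{n-2}=T^r\times T_x$. The map $T^r\times D^\perp_x\to T^{n-2}\times_{T_x}D^\perp_x$, $(s,v)\mapsto[s,v]$, is then an equivariant diffeomorphism.
\end{enumerate}

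Combining (i) and (ii) with the slice representations computed above gives the local model of Definition \ref{locstd} at every point, which proves the proposition. I expect the most care to be needed in (i), since ruling out finite components of the isotropy is exactly what Theorem \ref{ospace} guarantees and fails without simple connectivity.
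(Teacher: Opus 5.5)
Your proof is correct and follows the same route as the paper: you split by isotropy type using Theorem \ref{ospace}, apply the Slice Theorem, and identify the linear slice representations of $T^1$ on $\R^3$ and of $T^2$ on $\R^4$. The differences are that you derive these representations from the weight decomposition instead of citing Table 11 of \cite{McGowan}, and you make explicit the splitting $T^{n-2}=T^r\times T_x$ and the identification $T^{n-2}\times_{T_x}D^\perp_x\cong T^r\times D^\perp_x$, both of which the paper leaves implicit.
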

\begin{proof}
    Let $x\in M$. Recall from Definition \ref{locstd} that we must show that $x$ has a neighborhood equivariantly diffeomorphic to $T^r\times W\times\R^m$, where $r=(n-2)-\dim(T^{n-2}_x)$, $W$ is a faithful $T^{n-2}_x$-representation of real dimension $2\dim(T^{n-2}_x)$, and $T^k_x$ acts trivially on $T^r$. We proceed by cases based on the isotropy type of the orbit. By Theorem \ref{ospace}, the only possible isotropy groups are $\{e\}$, $T^1$, or $T^2$. It is clear for principal orbits that the $T^{n-2}$-action is locally standard, so we only consider the cases for singular orbits.

    Suppose that the orbit has $T^1$-isotropy. Then $T^{n-2}(x)\cong T^{n-3}$ and the normal disk is three-dimensional. By the \hyperref[ST]{Slice Theorem}, the orbit has a neighborhood equivariantly diffeomorphic to $T^{n-2}\times_{T^1} D^3$, where $T^1$ acts effectively and linearly on $D^3\simeq\R^3$. From Table 11 in \cite{McGowan}, there is only one possible induced action of $T^1\cong SO(2)$ on the normal sphere $S^2\subset\R^3$. The action on the normal disk is then the cone of the action on the normal sphere, so $T^1$ acts by rotations on the $\R^2$ factor of $\R^3\simeq\R^2\times\R^1$ and trivially on the $\R^1$ factor. Hence, the torus action is locally standard.

    Now suppose that the orbit has $T^2$-isotropy. Then $T^{n-2}(x)\cong T^{n-4}$ and the normal disk is four-dimensional. By the \hyperref[ST]{Slice Theorem}, the orbit has a neighborhood equivarianly diffeomorphic to $T^{n-2}\times_{T^2} D^4$, with the isotropy group $T^2$ acting effectively and linearly on $D^4\simeq\R^4$. We again identify the induced action of $T^2\cong SO(2)\times SO(2)$ on the normal sphere $S^3\subset\R^4$ from Table 11 in \cite{McGowan}. By coning this action, we see that $T^2\cong T^1\times T^1$ acts on $\R^4\cong\R^2\times\R^2$, where each circle factor of $T^2$ acts by rotations on the corresponding $\R^2$ factor and trivially on the other $\R^2$ factor. Therefore the action is locally standard.
\end{proof}
We are now in a position to simultaneously prove Theorem A and Corollary B.

\begin{proof}[Proof of Theorem A and Corollary B]
    We begin by proving Part (1). By Lemma \ref{vertices}, there are orbits with $T^2$-isotropy, so the action is isotropy-maximal. Lemma \ref{maxtorus} then implies that $n-2$ is the maximal symmetry rank, giving us the result.
    
    We now prove Part (2). It suffices to show that the conditions of Theorem \ref{classify} hold. That is, we must show that $M$ is rationally elliptic, the $T^{n-2}$-action is locally standard and isotropy-maximal, all faces of $\overline{M}$ are contractible, and all $4$-dimensional faces of $\overline{M}$ are diffeomorphic to disks. By Remark \ref{4Dfaces}, the last condition holds vacuously, so we proceed to show that the rest of these conditions hold simultaneously for dimensions four, five, and six. 
    
    By Theorem \ref{ospace}, $\overline{M}$ is homeomorphic to $D^2$. Applying Lemma \ref{vertices} and Theorem \ref{maxsides} yields that $\overline{M}$ has at least $n-2$ edges and at most 4. When $n\geq7$, this yields a contradiction, proving \hyperref[corB]{Corollary B}. For the remainder of the proof, we assume $n\leq6$. The possible orbit spaces are shown in Figure \ref{pictures}.

\begin{figure}[H] 
	\hspace*{\fill}
	\begin{subfigure}[b]{0.3\textwidth}
			\centering
			\includegraphics[width=0.7\textwidth]{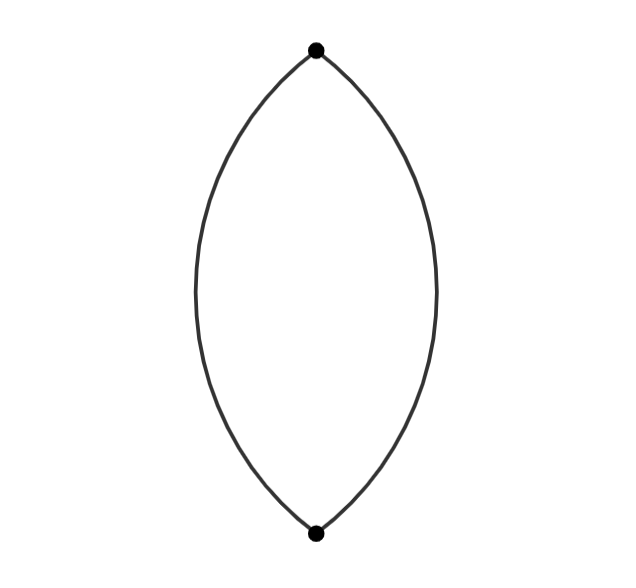}
			\caption{}
		\end{subfigure}
	\hfill
	\begin{subfigure}[b]{0.3\textwidth}
			\centering
			\includegraphics[width=0.75\textwidth]{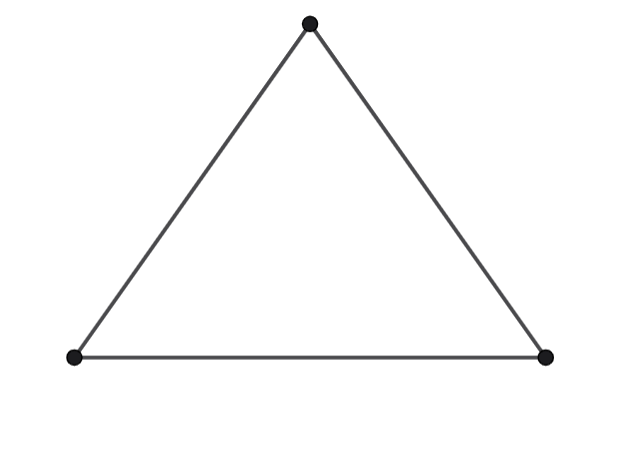}
			\caption{}
		\end{subfigure}
	\hfill
	\begin{subfigure}{0.3\textwidth}
			\centering
			\includegraphics[width=0.75\textwidth]{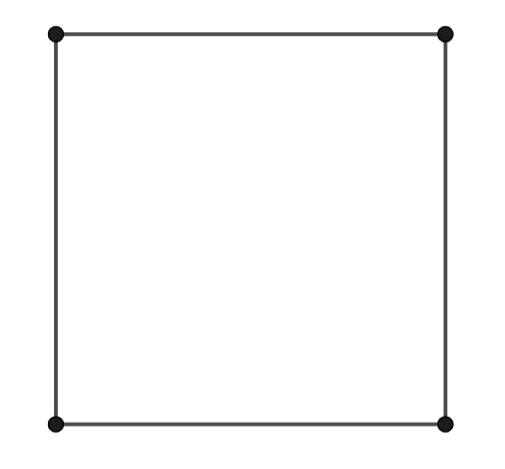}
			\caption{}
		\end{subfigure}
	\hspace*{\fill}
	\caption{Possible orbit spaces for Theorem A.}\label{pictures}
\end{figure}

\noindent Note that the angle at each vertex measures $\pi/2$ since the space of directions at a vertex is an interval isometric to $S^2/S^1$. Table 11 in \cite{McGowan} then implies that this quotient has length $\pi/2$. By Lemma \ref{vertices}, the lune (A) appears only when $n=4$, the triangle (B) when $n=4$ or 5, and the quadrilateral (C) when $n=4,5,$ or 6.

Now suppose that $M$ is an almost non-negatively curved $T^{n-2}$-manifold. By Proposition \ref{actionlocstd}, the $T^{n-2}$-action is locally standard. Since the action is by cohomogeneity two, Theorem \ref{anncre} implies that $M$ is rationally elliptic. Finally, all faces of $\overline{M}$ are contractible. Therefore Theorem \ref{classify} provides the desired classification and the proof is complete.
\end{proof}

\bibliographystyle{plain}

\bibliography{ANNCreferences}

@mastersthesis{Bartel,
    author = {Samuel Bartel},
    title = {\textit{Almost non-negative curvature and torus symmetry in low dimensions}},
    school = {Wichita State University},
    year = {2024}
}

@article{BGP,
    author = {Yuri Burago and Mikhael Gromov and Grigori Perelman},
    title = {A.D. Alexandrov spaces with curvature bounded below},
    journal = {Russian Math. Surveys},
    year = {1992},
    volume = {47},
    number = {2}
}

@book{Bredon,
    author = {Glen Bredon},
    title = {Introduction to compact transformation groups},
    series = {Pure and Applied Mathematics},
    number = {42},
    publisher = {Academic Press, New York NY},
    year = {1972}
}

@book{BBI,
    author = {Dmitri Burago and Yuri Burago and Sergei Ivanov},
    title = {A Course in Metric Geometry},
    series = {Graduate Studies in Mathematics},
    volume = {33},
    publisher = {Amer. Math. Soc.},
    year = 2001
}

@article{DES,
	title={Almost isotropy-maximal manifolds of non-negative curvature}, 
	author={Zheting Dong and Christine Escher and Catherine Searle},
	year={2024},
	journal={Trans. Amer. Math. Soc.},
    volume={\textbf{377}},
    number={7},
    doi={https://doi.org/10.1090/tran/9100}
}

@article{Escher_Searle,
    author = {Christine Escher and Catherine Searle},
    title = {Torus actions, maximality, and non-negative curvature},
    journal = {J. reine angew. Math.},
    year = {2021},
    volume = {\textbf{780}}
}

@article{Galaz_Garcia_2012,
    author = {Fernando {Galaz-Garc\'ia}},
    title = {Nonnegatively curved fixed point homogeneous manifolds in low dimensions},
    journal = {Geomitriae Dedicata},
    year = {2012},
    volume = {\textbf{157}}
}

@article{Galaz_Garcia_Searle_2011,
	URL = {http://www.jstor.org/stable/41291817},
	author = {Fernando Galaz-Garc\'ia and Catherine Searle},
	journal = {Proc. Amer. Math. Soc.},
	number = {7},
	publisher = {American Mathematical Society},
	title = {Low-dimensional manifolds with non-negative curvature and maximal symmetry rank},
	volume = {\textbf{139}},
	year = {2011}
}

@article{Galaz_Garcia_Kerin,
    title = {Cohomogeneity-two torus actions on non-negatively curved manifolds of low dimension},
    author = {Galaz-Garc\'ia, Fernando and Kerin, Martin},
    year = {2013},
    volume = {\textbf{276}},
    journal = {Mathematische Zeitschrift},
    doi = {10.1007/s00209-013-1190-5}
}

@article{Galaz_Garcia_Spindeler,
    title = {Nonnegatively curved fixed point homogeneous 5-manifolds},
    author = {Fernando {Galaz-Garc\'ia} and Wolfgang Spindeler},
    year = {2012},
    journal = {Annals of Global Analysis and Geometry},
    volume = {\textbf{41}}
}

@inproceedings{Grove_2002,
	author = {Grove, Karsten},
	year = {2002},
	pages = {31-53},
	title = {Geometry of, and via, symmetries},
    booktitle = {Conformal, Riemannian and Lagrangian geometry (Knoxville, TN, 2000)},
    series = {Univ. Lecture Ser.},
	volume = {27},
    publisher = {Amer. Math. Soc.},
	isbn = {9780821832103},
	doi = {10.1090/ulect/027/02}
}

@article{Grove_Searle_97,
    author = {Karsten Grove and Catherine Searle},
    title = {Differential topological restrictions by curvature and symmetry},
    journal = {J. Diff. Geo.},
    year = {1997},
    volume = {\textbf{47}}
}

@article{Harvey_Searle,
    author = {John Harvey and Catherine Searle},
    title = {Almost non-negatively curved 4-manifolds with torus symmetry},
    journal = {Proc. Amer. Math. Soc.},
    year = 2020,
    volume = {\textbf{148}},
    number = 11
}

@article{Hermann,
    author = {Martin Hermann},
    title = {Classification and characterization of rationally elliptic manifolds in low dimensions},
    journal = {Mathematische Zeitschrift},
    volume = {\textbf{288}},
    year = {2018},
    doi = {https://doi.org/10.1007/s00209-017-1927-7}
}

@article{Ishida_2019,
	url = {https://doi.org/10.1515/crelle-2016-0023},
	title = {Complex manifolds with maximal torus actions},
	author = {Hiroaki Ishida},
	volume = {\textbf{751}},
	journal = {J. reine angew. Math.},
	doi = {doi:10.1515/crelle-2016-0023},
	year = {2019},
	lastchecked = {2024-04-15}
}

@article {Kim,
	AUTHOR = {Kim, Soon Kyu and McGavran, Dennis and Pak, Jingyal},
	TITLE = {Torus group actions on simply connected manifolds},
	JOURNAL = {Pacific J. Math.},
	FJOURNAL = {Pacific Journal of Mathematics},
	VOLUME = {\textbf{53}},
	YEAR = {1974},
	ISSN = {0030-8730,1945-5844},
	MRCLASS = {57E15},
	MRNUMBER = {368051},
	MRREVIEWER = {L.\ Lininger},
	URL = {http://projecteuclid.org/euclid.pjm/1102911611},
}

@book{Kobayashi,
	author = {Shoshichi Kobayashi},
	title = {Transformation Groups in Differential Geometry},
	series = {Classics in Mathematics},
	publisher = {Springer Berlin, Heidelburg},
	year = {1995}
}

@article{McGowan,
	title = {How tightly can you fold a sphere?},
	journal = {Diff. Geom. Appl.},
	volume = {\textbf{22}},
	number = {1},
	year = {2005},
	issn = {0926-2245},
	doi = {https://doi.org/10.1016/j.difgeo.2004.07.007},
	url = {https://www.sciencedirect.com/science/article/pii/S0926224504000543},
	author = {Jill McGowan and Catherine Searle}
}

@article{Mostert,
	ISSN = {0003486X},
	URL = {http://www.jstor.org/stable/1970056},
	author = {Paul S. Mostert},
	journal = {Annals of Mathematics},
	number = {3},
	publisher = {Annals of Mathematics},
	title = {On a Compact Lie Group Acting on a Manifold},
	urldate = {2024-04-01},
	volume = {\textbf{65}},
	year = {1957}
}

@article{Paternain_Petean,
    author = {Paternain, Gabriel and Petean, Jimmy},
    title = {Zero entropy and bounded topology},
    journal = {Comment. Math. Helv.},
    volume = {\textbf{81}},
    number = {2},
    year = {2006}
}

@article{ShioyaYamaguchi,
  title={Collapsing Three-Manifolds Under a Lower Curvature Bound},
  author={Takashi Shioya and Takao Yamaguchi},
  journal={J. Diff. Geo.},
  year={2000},
  volume={\textbf{56}},
  number={1},
  url={https://api.semanticscholar.org/CorpusID:122671636}
}

@article{GWY,
    author =  {Karsten Grove and Burkhard Wilking and Joseph Yeager},
    title = {Almost non-negative curvature and rational ellipticity in cohomogeneity two},
    volume={\textbf{69}},
    number={7},
    journal ={Annales de L'Institut Fourier} ,
    year = {2019}
}

@article{SearleWilhelm,
    author = {Catherine Searle and Frederick Wilhelm},
    title = {How to lift positive Ricci curvature},
    journal = {Geometry and Topology},
    volume = {\textbf{19}},
    number ={3},
    year = {2015}
}

@article{Wiemeler,
    author = {Michael Wiemeler},
    title = {Torus manifolds and non-negative curvature},
    journal = {J. London Math. Soc.},
    volume = {\textbf{91}},
    number = {3},
    year = {2015}
}

\end{document}